\numberwithin{figure}{section}
\tikzset{
    man/.pic={%
        \fill [rounded corners=1.5] (0,0.4) -- (0,0.8) -- (0.4,0.8) -- (0.4,0.4) --
            (0.325,0.4) -- (0.325,0.7) -- (0.3,0.7) -- (0.3,0) -- (0.225,0) --
            (0.225,0.4) -- (0.175,0.4) -- (0.175,0) -- (0.1,0) -- (0.1,0.7) --
            (0.075,0.7) -- (0.075,0.4) -- cycle;
        \fill (0.2,0.9) circle (0.1);
        \coordinate (-head) at (0.2,1);
        \coordinate (-foot) at (0.2,0);
    }
}
\theoremstyle{plain}
\newtheorem{theorem}{Theorem}[section]                                          
\newtheorem{proposition}[theorem]{Proposition}                          
\newtheorem{lemma}[theorem]{Lemma}
\theoremstyle{definition}
\newtheorem{definition}[theorem]{Definition}
\newtheorem{remark}{Remark}[section]
\numberwithin{equation}{section}
\newlist{lteo}{enumerate}{1}
\setlist[lteo,1]{wide,itemsep={6pt plus 4pt},font=\bfseries,label={(\arabic*)}}
\newlist{iteo}{enumerate}{1}
\setlist[iteo,1]{wide,itemsep=2ex,label={\upshape(\roman*)}}
\newcommand{\dNormal}{\mathcal{N}}
\newcommand{\Cov}{\text{Cov}}
\newcommand{\xf}{x_{\infty}}
\newcommand{\yuf}{y_{1,\infty}}
\newcommand{\ydf}{y_{2,\infty}}
\newcommand{\tf}{\tau_{\infty}}
\newcommand{\df}{\delta_{\infty}}
\newcommand{\lf}{\Lambda_{\infty}}
\newcommand{\GV}{\mathcal{V}}
\newcommand{\vetord}{\ensuremath{(x, y_1, y)}}
\newcommand{\vetort}{\ensuremath{(x, y_1, y_2, y)}}
\newcommand{\nzeros}[2]{\ensuremath{Z(#1, #2)}}
\newcommand{\intazu}{\ensuremath{(0, 1)}}
\newcommand{\intfzu}{\ensuremath{(0, 1]}}
\newcommand{\intazx}{\ensuremath{(0, x_0)}}
\newcommand{\intfzx}{\ensuremath{(0, x_0]}}
\newcommand{\intazi}{\ensuremath{(0, \infty)}}
\newcommand{\intfzi}{\ensuremath{[0, \infty)}}
\title[The role of multiple repetitions on the size of a rumor]{The role of multiple repetitions on the size of a rumor}
\author{Alejandra Rada, Cristian F. Coletti, Elcio Lebensztayn and Pablo M. Rodriguez}
\date{}
\address{
\newline
Alejandra Rada and Cristian F. Coletti
\newline
UFABC - Centro de Matem\'atica, Computa\c{c}\~ao e Cogni\c{c}\~ao
\newline
Avenida dos Estados, 5001- Bangu - Santo André - São Paulo, Brasil.
\newline
e-mails:  cristian.coletti@ufabc.edu.br, alejandra.rada@ufabc.edu.br
\newline
\newline
Elcio Lebensztayn
\newline
Instituto de Matem\'atica, Estat\'istica e Computa\c{c}\~ao Cient\'ifica, Universidade Estadual de Campinas
\newline
Rua S\'ergio Buarque de Holanda 651, 13083-859, Campinas, SP, Brasil.
\newline
e-mail: lebensztayn@ime.unicamp.br 
\newline
\newline
Pablo M. Rodriguez
\newline
Centro de Ci\^encias Exatas e da Natureza, Universidade Federal de Pernambuco
\newline
Av. Prof. Moraes Rego, 1235. Cidade Universit\'aria, CEP 50670-901, Recife - PE, Brasil.
\newline
e-mail:  pablo@de.ufpe.br
}
\subjclass[2020]{Primary 60K35}
\keywords{Maki--Thompson model, Density dependent Markov chain, Information transmission, Limit theorems} 
\thanks{}
\begin{document}

\maketitle

\begin{abstract}
We propose a mathematical model to measure how multiple repetitions may influence the ultimate proportion of the population never hearing a rumor while it is being spread. The model is a multi-dimensional continuous-time Markov chain that can be seen as a generalization of the Maki--Thompson model for the propagation of a rumor within a homogeneously mixing population. In the well-known basic model, the population is made up of ``spreaders'', ``ignorants'' and ``stiflers'', and any spreader attempts to transmit the rumor to the other individuals via directed contacts. When the contacted individual is an ignorant, it becomes a spreader, while in the other two cases the initiating spreader becomes a stifler. The process in a finite population eventually reaches an equilibrium situation, where individuals are either stiflers or ignorants. We generalize the model by assuming that each ignorant becomes a spreader only after hearing the rumor a predetermined number of times. We identify and analyze a suitable limiting dynamical system of the model, and prove limit theorems that characterize the ultimate proportion of individuals in the different classes of the population. 
\end{abstract}

\maketitle

\section{Introduction}

The similarity between the spreading mechanisms of rumors and diseases was noted in the earlier 1960s by various writers but questioned by Daley and Kendall \cite{DKNature,DK}. They proposed a simple mathematical model for rumor transmission which appeared as an alternative to the well-known susceptible-infected-removed epidemic model (SIR). 
While a SIR model assumes that a population is subdivided into susceptible, infected and removed individuals, in Daley and Kendall's model, the individuals are classified as ignorants, spreaders or stiflers. At this point, the similarity between both classifications becomes obvious: ignorants in rumors are similar to susceptible in epidemics, spreaders are similar to infected cases, and stiflers correspond to removals. 
The main difference between these two processes is in the stopping mechanisms. While in the propagation of a disease an infected individual becomes removed only after a random time, which is in general independent of what happens with other individuals, in the transmission of a rumor a spreader stops spreading a piece of information right after getting involved in a contact with another individual who already knows it. 
The last situation describes the essence of a rumor model: it incorporates the event of losing  interest in propagating the rumor as a result of learning that it is already known by the other person in the meeting. In other words, in a rumor process, a spreader becomes a stifler at a rate which depends on the number of non-ignorant individuals in the population. This communication and the findings summarized in \cite{DK} gave rise to the theory of mathematical models for rumor transmission. 

The next step in the development of this theory was the formulation of a simplification in the Daley--Kendall model, due to Maki and Thompson, in \cite{maki-1973}. The Maki--Thompson model also describes the spreading of a rumor on a closed homogeneously mixed population, subdivided into the three classes of individuals mentioned before: ignorants (those not aware of the rumor), spreaders (who are spreading it), and stiflers (who know the rumor but have ceased communicating it after meeting somebody who has already heard it). 
But it assumes that the rumor spreads by {\it directed} contacts of spreaders with other individuals.
For $t\geq 0$, the number of ignorants, spreaders and stiflers at time $t$ is denoted by $X^{N}(t)$, $Y^{N}(t)$ and $Z^{N}(t)$, respectively. 
The model starts with $X^{N}(0) = N-1$, $Y^{N}(0) = 1$, $Z^{N}(0) = 0$, and $X^{N}(t) + Y^{N}(t) + Z^{N}(t) = N $ for all~$t$. The Maki--Thompson model is the continuous-time Markov chain $\{(X^{N}(t), Y^{N}(t))\}_{t \in [0,\infty)}$ which evolves according to the following transitions and rates
\begin{equation}\label{eq:transMT}
\begin{array}{cc}
\text{transition} \quad &\text{rate} \\[0.1cm]
(-1, 1) \quad &X Y, \\[0.1cm]
(0, -1) \quad &Y (N -1 - X).
\end{array}
\end{equation}

This means that if the process is in state $(i,j)$ at time $t$, then the probabilities that it jumps to states $(i-1,j+1)$ or $(i,j-1)$ at time $t+h$ are, respectively, $i\,j\,h + o(h)$ and $j (N - 1 - i)\,h + o(h)$, where $o(h)$ represents a function such that $\lim_{h\to 0}o(h)/h =0$.
This describes exactly the situation in which individuals interact by contacts initiated by the spreaders: the two possible transitions in \eqref{eq:transMT} correspond to spreader-ignorant, and spreader-(spreader or  stifler) interactions. 
In the first case, the spreader tells the rumor to the ignorant, who becomes a spreader. The other transition represents the transformation of a spreader into a stifler after initiating a meeting with a non-ignorant. 
That is, the last event describes the loss of interest in propagating the rumor derived from learning that it is already known by the other individual in the meeting. 
As we have mentioned, this is the main difference between rumor and epidemic models like SIR; see Figure \ref{FIG:transitionsMTvsSIR}. Note that, in the Maki--Thompson model, when a spreader contacts another spreader, only the initiating one becomes a stifler.

\begin{figure}[ht]
\begin{center}
\begin{tikzpicture}



\draw[gray!70] (-6,-2) to (6,-2);


\node at (0,-0.5) {


\begin{tikzpicture}[scale=1.3,every node/.style={scale=1.3}]

\pic[gray!70] at (-1.3,0) (myman) {man};
\node[below=1mm of myman-foot] {{\tiny ignorant/susceptible}};
\draw [fill=cyan!70,cyan!70] (-1.2,0.39) rectangle (-1,0.793);


\pic[gray!70] at (2,0) (myman) {man};
\node[below=1mm of myman-foot] {{\tiny spreader/infected}};
\draw [fill=red,red] (2.1,0.39) rectangle (2.3,0.793);


\pic[gray!70] at (5.3,0) (myman) {man};
\node[below=1mm of myman-foot] {{\tiny stifler/removed}};
\draw [fill=blue!30!black,blue!30!black] (5.4,0.39) rectangle (5.6,0.793);
\end{tikzpicture}

};

\node at (0,-2.95) {$vs$};

\node at (-3,-6) {
\begin{tikzpicture}

\node at (1.92,6.5) {MT rumor model};


\pic[gray!70] at (0,4) (myman) {man};
\draw [fill=red,red] (0.1,4.39) rectangle (0.3,4.793);
\pic[gray!70] at (0.5,4) (myman) {man};
\draw [fill=cyan!70,cyan!70] (0.6,4.39) rectangle (0.8,4.793);

\pic[gray!70] at (3,4) (myman) {man};
\draw [fill=red,red] (3.1,4.39) rectangle (3.3,4.793);
\pic[gray!70] at (3.5,4) (myman) {man};
\draw [fill=red,red] (3.6,4.39) rectangle (3.8,4.793);

\draw[->,gray!70,thick] (1.35,4.5) -- (2.5,4.5);
\draw [->,dashed] (0.8,5.15) to [bend left=45] (3.6,5.15);


\pic[gray!70] at (0,2) (myman) {man};
\draw [fill=red,red] (0.1,2.39) rectangle (0.3,2.793);
\pic[gray!70] at (0.5,2) (myman) {man};
\draw [fill=red,red] (0.6,2.39) rectangle (0.8,2.793);

\pic[gray!70] at (3,2) (myman) {man};
\draw [fill=blue!30!black,blue!30!black] (3.1,2.39) rectangle (3.3,2.793);
\pic[gray!70] at (3.5,2) (myman) {man};
\draw [fill=red,red] (3.6,2.39) rectangle (3.8,2.793);

\draw[->,gray!70,thick] (1.35,2.5) -- (2.5,2.5);
\draw [->,dashed] (0.3,3.15) to [bend left=45] (3.1,3.15);


\pic[gray!70] at (0,0) (myman) {man};
\draw [fill=red,red] (0.1,0.39) rectangle (0.3,0.793);
\pic[gray!70] at (0.5,0) (myman) {man};
\draw [fill=blue!30!black,blue!30!black] (0.6,0.39) rectangle (0.8,0.793);

\pic[gray!70] at (3,0) (myman) {man};
\draw [fill=blue!30!black,blue!30!black] (3.1,0.39) rectangle (3.3,0.793);
\pic[gray!70] at (3.5,0) (myman) {man};
\draw [fill=blue!30!black,blue!30!black] (3.6,0.39) rectangle (3.8,0.793);

\draw[->,gray!70,thick] (1.35,0.5) -- (2.5,0.5);
\draw [->,dashed] (0.3,1.15) to [bend left=45] (3.1,1.15);
\end{tikzpicture}};

\node at (3,-5) {
\begin{tikzpicture}


\node at (1.92,6.5) {SIR epidemic model};

\pic[gray!70] at (0,4) (myman) {man};
\draw [fill=red,red] (0.1,4.39) rectangle (0.3,4.793);
\pic[gray!70] at (0.5,4) (myman) {man};
\draw [fill=cyan!70,cyan!70] (0.6,4.39) rectangle (0.8,4.793);

\pic[gray!70] at (3,4) (myman) {man};
\draw [fill=red,red] (3.1,4.39) rectangle (3.3,4.793);
\pic[gray!70] at (3.5,4) (myman) {man};
\draw [fill=red,red] (3.6,4.39) rectangle (3.8,4.793);

\draw[->,gray!70,thick] (1.35,4.5) -- (2.5,4.5);
\draw [->,dashed] (0.8,5.15) to [bend left=45] (3.6,5.15);


\pic[gray!70] at (0.25,2) (myman) {man};
\draw [fill=red,red] (0.35,2.39) rectangle (0.55,2.793);

\pic[gray!70] at (3.25,2) (myman) {man};
\draw [fill=blue!30!black,blue!30!black] (3.35,2.39) rectangle (3.55,2.793);

\draw[->,gray!70,thick] (1.35,2.5) -- (2.5,2.5);

\end{tikzpicture}};

\end{tikzpicture}

\end{center}
\caption{Illustration of all possible transitions between the different classes of individuals in the Maki--Thompson rumor model and the SIR epidemic model. The population is subdivided into ignorants, spreaders and stiflers for rumors and in susceptible, infected and removed for diseases. The different transitions in both models involve interactions between these classes of individuals. When the change is a consequence of the interaction between two individuals, we present the result by a dashed arrow.}\label{FIG:transitionsMTvsSIR}
\end{figure}
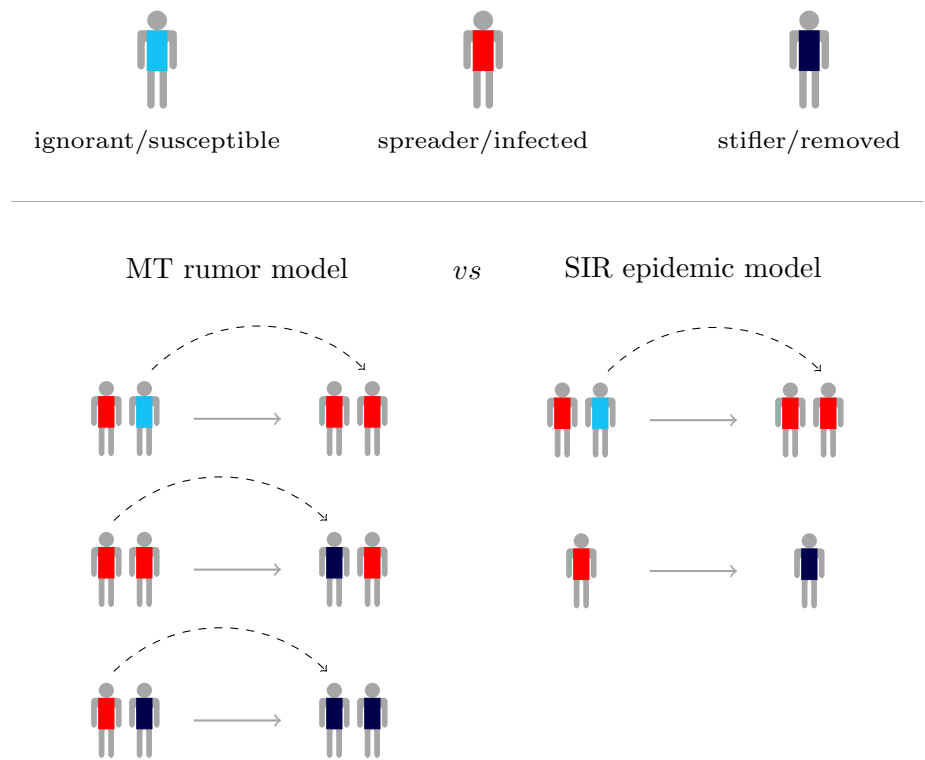

The deterministic version for the Maki--Thompson model is nothing more than the dynamical system given by:

\begin{equation}\label{eq:system}
\left\{
\begin{array}{l}
x^{\prime}(t) = - x(t) y(t),\\[0.2cm]
y^{\prime}(t) = (2x(t)-1)\,y(t),\\[0.2cm]
x(0)=1, y(0)=0.
\end{array}\right.
\end{equation}
The solution of \eqref{eq:system} represents an approximation, for sufficiently large $N$, of a scaled version of the entire trajectories of the Maki--Thompson model. Moreover, $x(t)$ and $y(t)$ may be seen as good approximations for the proportion of ignorants and spreaders, respectively, at time $t$, for $t>0$ and $N$ large enough. This justifies the fact that many results may be obtained directly from a suitable analysis of this limit dynamical system. However, a deterministic approach is not enough for describing the behavior of the random fluctuations between the solution of \eqref{eq:system} and the random trajectories coming from the original stochastic process. Although for the sake of simplicity we do not describe the Daley--Kendall model here, it is worth pointing out that the Daley--Kendall model has the same limit dynamical system \eqref{eq:system} as its deterministic version. This is the reason why the Maki--Thompson model has been developed as an alternative for the Daley--Kendall model. 
An extensive treatment of rumor models can be found in \cite[Chapter~5]{DG}.

A fundamental aspect of stochastic rumor models in finite populations is that the process eventually ends, so the main interest is to get information about the remaining proportion of people who never hear the rumor. 
This is one way of having a measure of, say, the size of the rumor. The first rigorous results in this direction are limit theorems for the remaining proportion of ignorants when the process ends, as the population size grows to $\infty$. 
It has been proved that, for both the Daley--Kendall and the Maki--Thompson models, such final  proportion of ignorants equals approximately $20\%$, see \cite{Sudbury,Watson}. 
The arguments used in these works rely on a martingale approach to guarantee the approximation of the random trajectories by the limit dynamical system given in~\eqref{eq:system}. 
For recent results of the theory of stochastic rumor models in homogeneously mixed populations, we refer the reader to \cite{Grejo/Rodriguez/2019,Lebensztayn-JMAA2015,lebensztayn/machado/rodriguez/2011a,lebensztayn/machado/rodriguez/2011b,EP}, and the references given there. 
Results for the Maki--Thompson model, assuming that the population is not necessarily homogeneous nor totally mixing, can be found in \cite{EAP,arruda,raey,speroto,MNP-PRE2004,moreno-PhysA2007}, and the references therein. 

In this work, we formulate a generalization for the Maki--Thompson model, by assuming that each ignorant becomes a spreader only after hearing the rumor a certain number of times. This assumption is motivated by recent experimental research, where it has been found that repeated presentation of uncertain statements increases validity judgments of those statements, see \cite{difonzo}. 
As far as we know, no rigorous results exist for this type of model describing multi-repetition in the propagation of a rumor. 
Our model evolves as a multidimensional Markov chain; we establish limit theorems by proving its convergence, in a way to be specified later, to a dynamical system suitably defined. 
Our approach relies on the application of convergence results coming from the theory of density dependent Markov chains. The advantage of using this theory lies in the fact that we can obtain results that not only localize the asymptotic proportion of individuals in the different classes of the population, but also provide a description of the random fluctuations between these values and those from the original stochastic process.

The paper is organized as follows. In Section \ref{S:model}, we define our model formally and state the main results. 
Our approach requires the identification of a limiting dynamical system whose solution is partially expressed in terms of a transcendental function $f$. 
The role of this function is the localization of the asymptotic proportion of remaining people who never hear the rumor. 
Also in Section \ref{S:model}, we present a brief exposition of some properties of $f$ and its zeros, which might be of independent interest. Our results are proved in Sections \ref{S:proofLLN}, \ref{S:Proof f} and \ref{S:Proofs CLT}.

\section{Model and main results}\label{S:model}

\subsection{Definition of the model}

Consider a closed homogeneously mixed population with $N$ individuals. 
For a fixed $k\in \mathbb{N}$, let us consider the \textit{$k$-spreading Maki--Thompson model}, which has the same basic rules as the classical Maki--Thompson model, except that  an ignorant individual becomes a spreader only after being involved in $k$ interactions with spreaders.
The population is subdivided into $k+2$ classes of individuals: ignorants, $i$-aware individuals for $i\in\{1,\ldots,k-1\}$ (who already know the rumor, but do not want to spread it), spreaders, and stiflers.
An $i$-aware individual has heard the rumor exactly $i$ times.
For $t\geq 0$, we denote the number of individuals at each one of these classes at time $t$ by $X^{N}(t)$, $Y^{N}_i(t)$, $Y^{N}(t)$ and $Z^{N}(t)$, respectively. Thus, the $k$-spreading Maki--Thompson model is the $(k+1)$-dimensional continuous-time Markov chain $\left\{\left(X^{N}(t),Y^{N}_1(t), \ldots,Y^{N}_{k-1}(t), Y^{N}(t)\right)\right\}_{t\in[0,\infty)}$ with the following increments and rates:
\begin{equation}\label{EQ:transitions}
{\allowdisplaybreaks
\begin{array}{ccc}
\text{increment} \quad &\text{rate} & \\[0.2cm]
- {\bf e}_1 + {\bf e}_2 \quad &X Y,& \\[0.2cm]
- {\bf e}_{i+1} + {\bf e}_{i+2} \quad &Y_i Y,& i\in\{1,2,\ldots,k-1\} \\[0.2cm]
- {\bf e}_{k+1} \quad &\left(N-1-X-\displaystyle\sum_{i=1}^{k-1}Y_i \right) Y,&
\end{array}}%
\end{equation}
where $\{{\bf e}_1, {\bf e}_2,\ldots,{\bf e}_{k+1}\}$ is the natural basis of the $(k+1)$-dimensional Euclidean space. In words, the transitions in \eqref{EQ:transitions} represent the observed result after different interactions between individuals (see Figure \ref{FIG:transitions}). The first transition is a consequence of an interaction between a spreader and an ignorant, which implies the ignorant becoming a $1$-aware individual. 
For each $i\in\{1,\ldots,k-1\}$, the second transition comes from the interplay between a spreader and an $i$-aware individual; in this case the $i$-aware individual becomes an $(i+1)$-aware one. 
Finally, the increment and the rate in the third line of~\eqref{EQ:transitions} correspond to the situation when a spreader contacts another spreader or a stifler individual, which produces a new stifler in the population. 

We assume that the following limits exist:
\begin{equation}\label{eq:limits_0}
\lim_{N\to \infty} \frac{X^N(0)}{N} =:x_{0},\;\;\;\; \lim_{N\to \infty} \frac{Y_{i}^N(0)}{N} =: y_{i,0}, \text{ for }i\in\{1,\ldots,k-1\},\;\;\;\;\lim_{N\to \infty} \frac{Y^N(0)}{N} =: y_{0}, 
\end{equation}	
with $x_0\in (0,1]$ and $y_{i,0}, y_0 \in [0,1]$. In addition, we let 
\[ z_{0}:= \lim_{N\to \infty} \frac{Z^N(0)}{N} = 1- \left(x_0 + \sum_{i=1}^{k-1} y_{i,0}+ y_0\right). \]
We call the set of limits in \eqref{eq:limits_0} the \textit{initial configuration of the process}; we refer to the case when $x_0=1$ as the \textit{standard initial configuration}. 
Notice that $X^N(t)+\sum_{i=1}^{k-1}Y_{i}^N(t)+Y^N(t)+Z^N(t)=N$ for any $t\geq 0$, i.e., we consider a closed finite population of size $N$. We point out that the classical Maki--Thompson model is obtained by considering $k =1$ and the standard initial configuration (the basic version assumes $X^N(0)=N-1$ and $Y^N(0)=1$).

\begin{figure}[ht]
\begin{center}
\begin{tikzpicture}



\draw[gray!70] (-1.7,1.9) to (-1.7,7);


\node at (-4,4.5) {


\begin{tikzpicture}[scale=1.3,every node/.style={scale=1.3}]
\pic[gray!70] at (0,6) (myman) {man};
\node[below=0mm of myman-foot] {{\footnotesize ignorant}};
\draw [fill=cyan!70,cyan!70] (0.1,6.39) rectangle (.3,6.793);
\node at (0.5,7.1) {{\footnotesize \faQuestion}};


\pic[gray!70] at (0,4) (myman) {man};
\node[below=1mm of myman-foot] {{\footnotesize $i$-aware}};
\draw [fill=blue!20,blue!20] (0.1,4.39) rectangle (0.3,4.793);
\node at (0.5,5.1) {{\footnotesize \faExclamation}};


\pic[gray!70] at (0,2) (myman) {man};
\node[below=1mm of myman-foot] {{\footnotesize spreader}};
\draw [fill=red,red] (0.1,2.39) rectangle (0.3,2.793);
\node at (0.5,3.1) {{\footnotesize \faComment}};


\pic[gray!70] at (0,0) (myman) {man};
\node[below=1mm of myman-foot] {{\footnotesize stifler}};
\draw [fill=blue!30!black,blue!30!black] (0.1,0.39) rectangle (0.3,0.793);
\node at (0.5,1.1) {{\footnotesize \faComment}};
\draw[ultra thick] (0.3,1.2) -- (0.7,1);
\draw[thick,white] (0.25,1.2) -- (0.65,1);
\end{tikzpicture}

};


\pic[gray!70] at (0,8) (myman) {man};
\draw [fill=red,red] (0.1,8.39) rectangle (0.3,8.793);
\pic[gray!70] at (0.5,8) (myman) {man};
\draw [fill=cyan!70,cyan!70] (0.6,8.39) rectangle (0.8,8.793);

\pic[gray!70] at (3,8) (myman) {man};
\draw [fill=red,red] (3.1,8.39) rectangle (3.3,8.793);
\pic[gray!70] at (3.5,8) (myman) {man};
\draw [fill=blue!20,blue!20] (3.6,8.39) rectangle (3.8,8.793);
\node at (4,9) {\tiny $1$};

\draw[->,gray!70,thick] (1.35,8.5) -- (2.5,8.5);
\draw [->,dashed] (0.8,9.15) to [bend left=45] (3.6,9.15);


\pic[gray!70] at (0,6) (myman) {man};
\draw [fill=red,red] (0.1,6.39) rectangle (0.3,6.793);
\pic[gray!70] at (0.5,6) (myman) {man};
\draw [fill=blue!20,blue!20] (0.6,6.39) rectangle (0.8,6.793);
\node at (0.95,7) {{\tiny $i$}};

\pic[gray!70] at (3,6) (myman) {man};
\draw [fill=red,red] (3.1,6.39) rectangle (3.3,6.793);
\pic[gray!70] at (3.5,6) (myman) {man};
\draw [fill=blue!20,blue!20] (3.6,6.39) rectangle (3.8,6.793);

\draw[->,gray!70,thick] (1.35,6.5) -- (2.5,6.5);
\draw [->,dashed] (0.8,7.15) to [bend left=45] (3.6,7.15);
\node at (4.25,7) {\tiny $(i+1)$};


\pic[gray!70] at (0,4) (myman) {man};
\draw [fill=red,red] (0.1,4.39) rectangle (0.3,4.793);
\pic[gray!70] at (0.5,4) (myman) {man};
\draw [fill=blue!20,blue!20] (0.6,4.39) rectangle (0.8,4.793);
\node at (1.25,5) {{\tiny $(k-1)$}};

\pic[gray!70] at (3,4) (myman) {man};
\draw [fill=red,red] (3.1,4.39) rectangle (3.3,4.793);
\pic[gray!70] at (3.5,4) (myman) {man};
\draw [fill=red,red] (3.6,4.39) rectangle (3.8,4.793);

\draw[->,gray!70,thick] (1.35,4.5) -- (2.5,4.5);
\draw [->,dashed] (0.8,5.15) to [bend left=45] (3.6,5.15);


\pic[gray!70] at (0,2) (myman) {man};
\draw [fill=red,red] (0.1,2.39) rectangle (0.3,2.793);
\pic[gray!70] at (0.5,2) (myman) {man};
\draw [fill=red,red] (0.6,2.39) rectangle (0.8,2.793);

\pic[gray!70] at (3,2) (myman) {man};
\draw [fill=blue!30!black,blue!30!black] (3.1,2.39) rectangle (3.3,2.793);
\pic[gray!70] at (3.5,2) (myman) {man};
\draw [fill=red,red] (3.6,2.39) rectangle (3.8,2.793);

\draw[->,gray!70,thick] (1.35,2.5) -- (2.5,2.5);
\draw [->,dashed] (0.3,3.15) to [bend left=45] (3.1,3.15);


\pic[gray!70] at (0,0) (myman) {man};
\draw [fill=red,red] (0.1,0.39) rectangle (0.3,0.793);
\pic[gray!70] at (0.5,0) (myman) {man};
\draw [fill=blue!30!black,blue!30!black] (0.6,0.39) rectangle (0.8,0.793);

\pic[gray!70] at (3,0) (myman) {man};
\draw [fill=blue!30!black,blue!30!black] (3.1,0.39) rectangle (3.3,0.793);
\pic[gray!70] at (3.5,0) (myman) {man};
\draw [fill=blue!30!black,blue!30!black] (3.6,0.39) rectangle (3.8,0.793);

\draw[->,gray!70,thick] (1.35,0.5) -- (2.5,0.5);
\draw [->,dashed] (0.3,1.15) to [bend left=45] (3.1,1.15);

\end{tikzpicture}

\end{center}
\caption{Illustration of all possible transitions between the different classes of individuals in the $k$-spreading Maki--Thompson model. The population is subdivided into ignorants, $i$-aware individuals for $i\in\{1,\ldots k-1\}$, spreaders and stiflers. The different transitions in the model involve interactions between a spreader and another individual. The resulting changes are presented by dashed arrows.}\label{FIG:transitions}
\end{figure}
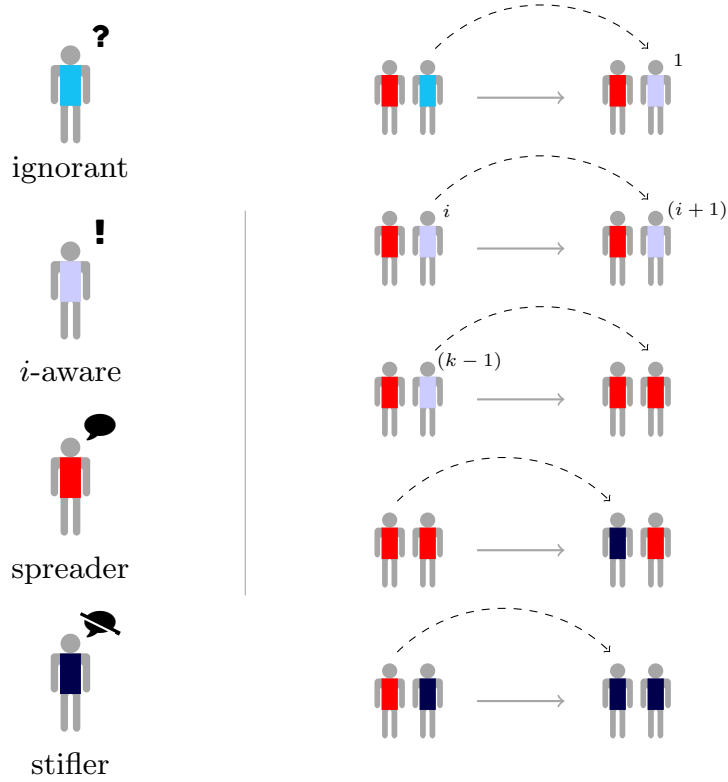

\subsection{The remaining proportion of individuals never hearing the rumor}

\smallskip
Our purpose is to investigate the remaining proportion of individuals in the different classes of the population when the rumor outbreak ends. Notice that such event occurs when there are no more spreaders in the population. In other words, if we consider the absorption time of the process defined as
\[ \tau^{(N)}:=\inf\{t\geq 0: Y^{(N)}(t)=0\}, \]
then we shall characterize the asymptotic behavior of the random variables $X^{N}\left(\tau^{(N)}\right)/N$, $Y_{i}^{N}\left(\tau^{(N)}\right)/N$ for $i\in \{1,\ldots,k-1\}$ and $Z^{N}\left(\tau^{(N)}\right)/N$, as $N \to \infty$. 
Part of our approach (see Section \ref{S:proofLLN} for details) is the identification of a limit dynamical system suitably defined, whose solution is partially expressed in terms of a transcendental function~$f$. Let $k\in\mathbb{N}$, $x_0\in (0,1]$ and $y_0, y_{i,0} \in [0,1]$, $i\in\{1,\ldots,k-1\}$, be fixed constants. 
In what follows, let $y_{0,0} := x_0$, and consider the function $f:(0,x_{0}]\to \mathbb{R}$ defined, for each $x\in(0,x_0]$, by
\begin{equation}\label{func:louca}
	f(x)= y_0+\rho(0) -\frac{x}{x_0} \sum_{r=0}^{k-1}\frac{\rho(r)}{r!} \left\{\ln \left(\frac{x_0}{x}\right)\right\}^r   -\ln \left(\frac{x_0}{x}\right),
\end{equation}
where
\begin{equation}\label{eq:rho}
\rho(r):=\sum_{j=0}^{k-r-1}(k-j-r+1)\, y_{j,0}
\end{equation}
for $r\in\{0,1,\ldots, k-1\}$. 
The function $f$ is the key for localizing the asymptotic remaining proportion of people never hearing the rumor, which appears as one of its zeros in $(0,x_0]$. Notice that $f$ is a continuous function on $(0,x_0]$ for which
\[ \lim_{x\to 0^{+}}f(x)=-\infty
\quad \text{and} \quad
f(x_0)=y_0 \geq 0. \]
Hence, we obtain the following result.

\begin{proposition}\label{prop:function}
Let $k\in\mathbb{N}$, $x_0\in (0,1]$ and $y_{i,0}, y_0 \in [0,1]$, for $i\in\{1,\ldots,k-1\}$, be fixed constants. If $f$ is the function defined by \eqref{func:louca}, then $f$ has at least one zero in the interval $(0,x_0]$.
\end{proposition}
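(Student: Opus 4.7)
The plan is to apply the Intermediate Value Theorem directly to $f$ on $(0,x_0]$. The two ingredients required, continuity of $f$ and a sign change (or vanishing) on the interval, are essentially already recorded in the excerpt, so my proof would consist mainly of verifying these facts carefully.

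First, I would observe that $f$ is continuous on $(0,x_0]$. Indeed, each summand in \eqref{func:louca} is a finite product of continuous functions on this interval: the monomial $x/x_0$, the constants $\rho(r)/r!$, and integer powers of the continuous map $x\mapsto \ln(x_0/x)$. Next, I would verify the two boundary values claimed just before the statement. At $x=x_0$ one has $\ln(x_0/x)=0$, so (with the convention $0^0=1$) only the $r=0$ term survives in the inner sum, giving
\[
f(x_0) \;=\; y_0 + \rho(0) - \rho(0) - 0 \;=\; y_0 \;\geq\; 0.
\]
For the limit as $x\to 0^+$, the key observation is that $\ln(x_0/x)\to +\infty$ but $x\,\{\ln(x_0/x)\}^r\to 0$ for every fixed $r\in\{0,1,\ldots,k-1\}$ (linear decay beats any power of the logarithm). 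Hence the middle sum in \eqref{func:louca} tends to $0$, while $-\ln(x_0/x)\to -\infty$, so $\lim_{x\to 0^+}f(x)=-\infty$.

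With continuity and these two boundary facts in hand, I would conclude in two cases. If $y_0>0$, then $f(x_0)>0$, while $f(x)<0$ for $x$ sufficiently small; the Intermediate Value Theorem then produces a zero of $f$ in the open interval $(0,x_0)$. If instead $y_0=0$, then $x_0$ is itself a zero of $f$ in $(0,x_0]$.

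There is no real obstacle in this proof: the content of the proposition is the standard IVT, and the only verification that requires a moment of care is the asymptotic $\lim_{x\to 0^+}f(x)=-\infty$, which reduces to the elementary fact that $x\,\{\ln(1/x)\}^{r}\to 0$ as $x\to 0^+$ for every fixed $r$. The more delicate questions of how many zeros $f$ has in $(0,x_0]$, whether the relevant zero is unique, and whether it coincides with the asymptotic proportion of ignorants would presumably be taken up in the subsequent analysis of $f$ promised in Section~\ref{S:Proof f}.
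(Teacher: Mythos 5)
Your proof is correct and follows essentially the same route as the paper, which establishes the proposition precisely via the continuity of $f$, the boundary values $\lim_{x\to 0^+}f(x)=-\infty$ and $f(x_0)=y_0\geq 0$, and the Intermediate Value Theorem. Your verification of $f(x_0)=y_0$ and of the asymptotic $x\,\{\ln(x_0/x)\}^r\to 0$ is exactly the small amount of care the paper leaves implicit.
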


The principal significance of Proposition \ref{prop:function} is that it allows the following definition.

\begin{definition}
Let $k\in\mathbb{N}$, $x_0\in (0,1]$ and $y_0, y_{i,0} \in [0,1]$, for $i\in\{1,\ldots,k-1\}$, be fixed constants. 
Let $f$ be the function defined by \eqref{func:louca}. Define
\begin{equation}\label{eq:xinfty}
x_{\infty}:=x_{\infty}(k,x_{0},y_{1,0},\ldots,y_{k-1,0},y_0)=\sup\{x\in (0,x_0]:f(x)<0\}.
\end{equation} 
\end{definition}

\begin{theorem}
\label{T: LLN}
Consider the $k$-spreading Maki--Thompson model with initial configuration $x_0, y_{1,0},\ldots, y_{k-1,0}, y_0$. Then
\begin{equation*}
\lim_{N\to \infty}\frac{{X}^{N}(\tau^{(N)})}{N}=x_\infty \;\; \text{a.s.} \quad \text{and}
\quad \lim_{N\to \infty}\frac{{Y_i}^{N}(\tau^{(N)})}{N}=y_{i,\infty} \;\; \text{a.s.},
\end{equation*}
where $x_{\infty}$ is given by \eqref{eq:xinfty} and
\begin{equation*}
y_{i,\infty}:= \frac{x_{\infty}}{x_0} \sum_{r=0}^{i} y_{i-r,0}\, \frac{\left\{\ln (x_0 / x_{\infty})\right\}^r}{r!}, 
\end{equation*}
for $i\in \{1,\ldots,k-1\}$.
\end{theorem}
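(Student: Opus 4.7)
The plan is to view the rescaled process $N^{-1}(X^N,Y_1^N,\dots,Y_{k-1}^N,Y^N)$ as a density dependent continuous-time Markov chain in the sense of Ethier and Kurtz, to pass to a fluid limit on compact time intervals, to solve the limiting ODE in closed form, and then to transfer the convergence from finite horizons to the random absorption time $\tau^{(N)}$. Because the density-scaled rates obtained from \eqref{EQ:transitions} are polynomial in the densities and hence locally Lipschitz, Kurtz's functional law of large numbers yields, for each fixed $T>0$, almost sure uniform convergence on $[0,T]$ to the unique solution of
\begin{equation*}
x'=-xy,\qquad y_i'=(y_{i-1}-y_i)\,y\ \ (1\le i\le k-1,\ y_0:=x),\qquad y'=\Bigl(y_{k-1}-1+x+\sum_{i=1}^{k-1}y_i\Bigr)y,
\end{equation*}
started from the initial data in \eqref{eq:limits_0}.

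Next, while $y(t)>0$, the map $s(t):=\ln(x_0/x(t))$ is a valid time change with $ds/dt=y>0$. In the $s$-variable each $y_i$ satisfies the linear recursion $dy_i/ds+y_i=y_{i-1}$ with $y_0=x_0 e^{-s}$; multiplying by the integrating factor $e^s$ and integrating inductively gives
\[
y_i(s)=\frac{x}{x_0}\sum_{r=0}^{i}y_{i-r,0}\,\frac{s^r}{r!}.
\]
Substituting these formulas into $dy/ds=y_{k-1}-1+x+\sum_{i=1}^{k-1}y_i$, interchanging the order of the resulting double sum, and matching the emerging combinatorial coefficients with the $\rho(r)$ of \eqref{eq:rho}, integration from $0$ to $s$ produces exactly $y(t)=f(x(t))$ with $f$ as in \eqref{func:louca}. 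Since $x(t)$ is strictly decreasing and positive, $x(t)\to x_\ast$ for some $x_\ast\in(0,x_0)$ and $y(t)\to f(x_\ast)$; the relation $x'=-xy$ combined with a Gronwall argument forces $f(x_\ast)=0$, and the definition \eqref{eq:xinfty} together with the connectedness analysis of $\{f\ge 0\}$ to be carried out in Section \ref{S:Proof f} identifies $x_\ast=x_\infty$. The explicit formula then yields $y_i(t)\to y_{i,\infty}$.

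The delicate step, and the main obstacle, is to transfer these ODE limits from any fixed horizon $T$ to the random time $\tau^{(N)}$, since the deterministic $y(t)$ tends to $0$ only asymptotically. Given $\varepsilon>0$, I would pick $T$ so that the ODE trajectory is within $\varepsilon/2$ of its limit; the first step then places $X^N(T)/N$ within $\varepsilon$ of $x_\infty$ and $Y^N(T)/N$ within $\varepsilon$ of $0$ with probability tending to $1$. Monotonicity of $X^N$ in $t$ gives the upper bound $X^N(\tau^{(N)})/N\le x_\infty+\varepsilon$ directly. For the matching lower bound one must show that the total number of ignorant-to-aware transitions in $[T,\tau^{(N)}]$ is $o_P(N)$: the Doob-Meyer compensator of that counting process is $\int_T^{\tau^{(N)}} X^N Y^N\,ds$, which I would bound by coupling $Y^N$ on $[T,\tau^{(N)}]$ with a subcritical branching-type process (legitimate because the spreader per-capita growth rate at the fluid limit, $y_{k-1,\infty}-(1-x_\infty-\sum_{i=1}^{k-1}y_{i,\infty})=-x_\infty f'(x_\infty)$, is strictly negative whenever $f$ crosses zero transversally at $x_\infty$, a property verifiable directly from \eqref{func:louca}). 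A Borel-Cantelli argument along a sequence $\varepsilon_n\to 0$ then upgrades convergence in probability to almost sure convergence, and the same scheme applied to each $Y_i^N$ yields the second limit.
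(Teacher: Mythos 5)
Your route differs from the paper's in one essential respect: you take the fluid limit of the original chain, whose rates all carry the factor $Y$, and only afterwards perform a deterministic time change to solve the resulting ODE. The paper instead performs a random time change on the stochastic process itself (running the clock at speed $\int_0^{\cdot} Y^{(N)}(s)\,ds$), which strips the factor $Y$ from every rate \emph{before} Kurtz's theorem is invoked. The two processes share the same embedded jump chain, hence the same absorption point, but the time-changed limiting ODE (the system \eqref{EQ:limitsystemODE}) reaches the set $\{y=0\}$ at a \emph{finite} time $\tau_{\infty}$, so the final proportions are read off from uniform convergence on a single bounded interval $[0,\tau_{\infty}+\delta]$. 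This makes your entire third paragraph --- the subcritical-branching control of the residual phase --- unnecessary. Your sketch of that control is plausible when $y_0>0$ but incomplete as written: the per-spreader birth rate $Y_{k-1}(t)$ is not monotone and can grow after time $T$ as lower-aware classes are promoted, so domination by a subcritical branching process needs a bootstrap (stop the coupling the first time the densities leave a $\delta$-neighbourhood of their limiting values, and check that the total-progeny bound prevents this from happening before absorption), and it degenerates entirely if $f'(x_{\infty})=0$.

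The more serious problem is that your first step fails outright in the case the theorem most needs to cover, namely $y_0=0$ --- which includes the standard initial configuration $x_0=1$ and in particular the classical model started from a single spreader. When $y(0)=0$, the system $x'=-xy$, $y_i'=(y_{i-1}-y_i)\,y$, $y'=(\cdots)\,y$ has as its unique solution the constant trajectory $y\equiv 0$, $x\equiv x_0$, $y_i\equiv y_{i,0}$. Kurtz's law of large numbers then only tells you that $X^{N}/N\to x_0$ on every compact interval (in the appropriate time scale), which says nothing about the final size; your change of variable ``while $y(t)>0$'' never applies because $y(t)$ is never positive. This is exactly the degeneracy the random time change removes: after dividing out $Y$, the drift of $y$ at the initial point is no longer forced to vanish, the deterministic trajectory leaves $y=0$, and one obtains $y(t)=f(x(t))$ with $f$ as in \eqref{func:louca} together with a finite hitting time of $\{f\le 0\}$. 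To repair your argument you would either have to adopt that time change, or insert a separate analysis of the take-off phase (e.g., wait until $Y^{N}$ reaches $\eps N$, restart the fluid limit from there, and let $\eps\to 0$); neither step is indicated in the proposal.
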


\begin{figure}
\begin{center}
\includegraphics[width=8cm]{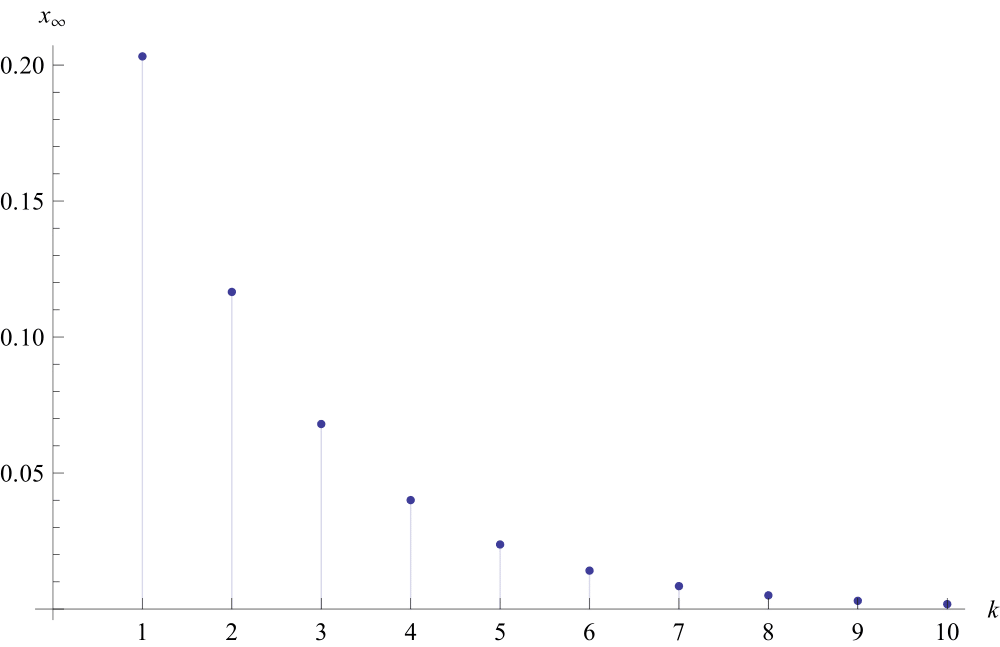}
\caption{Graph of $x_{\infty}$ as a function of $k$, for the standard initial configuration.}
\label{fig:xinfty_k}
\end{center}
\end{figure}

\begin{remark}
Theorem \ref{T: LLN} implies that $\lim_{N\to \infty}{Z}^{N}(\tau^{(N)})/N=z_{\infty}$ a.s., where $z_{\infty}:= 1- \left(x_{\infty} + \sum_{i=1}^{k-1} y_{i,\infty}\right)$. 
\end{remark}

The proof of Theorem \ref{T: LLN} is given in Section~\ref{S:proofLLN}. Now we present a result that yields information about the function $f$ and its zeros. Although this is a technical result we include it here because it is of mathematical interest in itself. Given a real function~$g$ and an interval $I$, let $\nzeros{g}{I}$ denote the number of zeros of $g$ in $I$.
For $k \geq 1$ an integer, let
\[ \gamma(k, t) = \int_{0}^{t} u^{k-1} \, e^{-u} \, du, \, t \geq 0, \]
be the \textit{lower incomplete gamma function}.

\begin{theorem}
\label{T: Prop-f}
\begin{itemize}
\item[(i)] Let $f$ be the function given by \eqref{func:louca}, for the $k$-spreading Maki--Thompson model with the standard initial configuration. Then

\begin{equation}
\label{F: f-CIP}
f(x) = \frac{(1+k+\ln x) \, \gamma(k,-\ln x) - x \, (-\ln x)^k}{(k-1)!}, \, x \in \intfzu.
\end{equation}
In addition, $\nzeros{f}{\intfzu} = 2$.
The two zeros of $f$ are $x_{\infty} < 1$ and $x_{0} = 1$.

\item[(ii)] Suppose that $x_{0} < 1$ and $y_{i,0} = 0 $ for every $i\in\{1,\dots,k-1\}$.
Then $\nzeros{f}{\intfzx} \in \{1, 3\}$.

\end{itemize}
\end{theorem}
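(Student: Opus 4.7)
\emph{Part (i).} Substituting the standard initial configuration ($x_0=1$, $y_0=0$, $y_{i,0}=0$, so $y_{0,0}=1$) into \eqref{eq:rho} collapses the sum to its $j=0$ term, giving $\rho(r)=k+1-r$. Feeding this into \eqref{func:louca} yields
\[ f(x)=(k+1)+\ln x-x\sum_{r=0}^{k-1}\frac{k+1-r}{r!}\,(-\ln x)^r. \]
Set $u=-\ln x$ and use the standard truncation identity $\gamma(k,u)=(k-1)!\bigl(1-e^{-u}\sum_{r=0}^{k-1}u^r/r!\bigr)$. Writing $k+1-r=(k+1)-r$ and shifting indices in $\sum_{r}r\,u^r/r!=\sum_{r}u^r/(r-1)!$, a direct computation shows
\[ e^{-u}\sum_{r=0}^{k-1}\frac{k+1-r}{r!}\,u^r=(k+1-u)\left(1-\frac{\gamma(k,u)}{(k-1)!}\right)+\frac{e^{-u}u^k}{(k-1)!}, \]
and rearranging produces the formula \eqref{F: f-CIP}.

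\emph{Counting zeros in (i).} Define $g(u):=(1+k-u)\gamma(k,u)-e^{-u}u^k$, so that $(k-1)!\,f(x)=g(-\ln x)$ and zeros of $f$ on $(0,1]$ biject with zeros of $g$ on $[0,\infty)$. Then $g(0)=0$ and $g(u)\to-\infty$ since $\gamma(k,u)\to(k-1)!$. Using $\gamma'(k,u)=u^{k-1}e^{-u}$, the derivatives simplify beautifully to
\[ g'(u)=e^{-u}u^{k-1}-\gamma(k,u),\qquad g''(u)=e^{-u}u^{k-2}(k-1-2u)\ \ (k\geq 2), \]
(and $g'(u)=2e^{-u}-1$ when $k=1$). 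Hence $g'$ has a unique zero $u^{\ast}>0$: for $k\geq 2$, because $g''$ changes sign exactly once at $u=(k-1)/2$ while $g'(0)=0$ and $g'(\infty)=-(k-1)!$; for $k=1$, because $g'$ is strictly decreasing from $1$ to $-1$. Thus $g$ increases from $0$ to a positive maximum on $[0,u^{\ast}]$ and then decreases monotonically to $-\infty$ on $[u^{\ast},\infty)$, producing exactly one zero in $(0,\infty)$. Together with the zero at $u=0$, this gives $\nzeros{f}{\intfzu}=2$, the zeros being $x_0=1$ and $x_\infty=e^{-u^{\ast}}<1$.

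\emph{Part (ii).} Now $y_{0,0}=x_0$ is the only nonzero initial proportion among the $y_{j,0}$ with $j<k$, so $\rho(r)=(k+1-r)x_0$. With $v=\ln(x_0/x)\in[0,\infty)$ and $h(v):=f(x_0e^{-v})$, the identical algebraic identity (rescaled by $x_0$) gives the clean decomposition
\[ h(v)=y_0+(x_0-1)\,v+\frac{x_0}{(k-1)!}\,g(v), \]
so $h$ is an affine perturbation of a rescaling of the $g$ from part (i). Consequently $h''(v)=\tfrac{x_0}{(k-1)!}g''(v)$ inherits the single sign change of $g''$, making $h'$ unimodal (for $k\geq 2$). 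Since $h'(0)=x_0-1<0$ and $h'(v)\to-1$, the derivative $h'$ either stays negative on $[0,\infty)$ or vanishes at exactly two points; correspondingly $h$ is either monotone decreasing or decreases–increases–decreases with one local min and one local max. Rolle's theorem then bounds $\nzeros{f}{\intfzx}$ by $3$, and a parity argument comparing $h(0)=y_0\geq 0$ with $h(+\infty)=-\infty$ (noting that when $y_0=0$ the point $v=0$ is a simple zero because $h'(0)\neq 0$) forces the count to be odd, yielding $\nzeros{f}{\intfzx}\in\{1,3\}$.

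\emph{Main obstacle.} The algebraic reduction to \eqref{F: f-CIP} and the affine decomposition of $h$ are the essential computational moves; once they are available, the zero analysis reduces to the transparent sign of $g''(u)$. The delicate point is the parity step excluding the count $2$ in part (ii), which requires handling tangential (double) zeros of $h$ at an interior critical point consistently with the convention adopted for $\nzeros{\cdot}{\cdot}$.
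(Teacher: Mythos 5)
Your derivation of \eqref{F: f-CIP} and your zero count in part (i) are correct, but they travel a genuinely different road from the paper's. For the formula, the paper writes $\phi(t)=f(e^{-t})$ as a Poisson expectation, $\phi(t)=(k+1)P(R\geq k)-E(R;R\geq k)$ with $R\sim\mathrm{Poisson}(t)$, and invokes the Poisson--Gamma identity; your truncated-exponential identity for $\gamma(k,u)$ is the same fact in analytic clothing, and your index-shift computation checks out. For the zero count the paper uses a sign-rule of P\'olya and Szeg\H{o}: the number of positive zeros of $\psi(t)=(y_0+\rho(0)-t)e^t-\sum_r \rho(r)t^r/r!$ differs from the number of sign changes $C$ in its Taylor coefficients by a nonnegative even integer, and under the standard initial configuration $C=1$. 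Your alternative is bare calculus: $g'(u)=e^{-u}u^{k-1}-\gamma(k,u)$ and $g''(u)=e^{-u}u^{k-2}(k-1-2u)$ are correct, the unimodality of $g'$ follows, and the increase-then-decrease profile of $g$ from $g(0)=0$ to $-\infty$ gives exactly one interior zero. This is more elementary and self-contained than the paper's argument (no external theorem), at the cost of being harder to push through the case analysis of part (ii).

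In part (ii) there is a genuine gap, and you have correctly located it yourself. The decomposition $h(v)=y_0+(x_0-1)v+\tfrac{x_0}{(k-1)!}g(v)$ is right and gives, via Rolle, at most three distinct zeros. But the concluding ``parity argument'' does not prove $\nzeros{f}{\intfzx}\in\{1,3\}$: if $h$ has a tangential zero at its interior local minimum or maximum (i.e. $h(v_i)=h'(v_i)=0$), then $h$ has exactly two distinct zeros, and nothing in your argument excludes this degenerate choice of $(x_0,y_0)$. The paper escapes this because the P\'olya--Szeg\H{o} theorem counts zeros with multiplicity, so a tangential zero contributes $2$ and the total remains odd; to finish your proof you must either adopt the same convention for $\nzeros{\cdot}{\cdot}$ and say so, or show directly that the tangential configuration cannot occur, neither of which you do. Two smaller points: your step $h'(0)=x_0-1<0$ uses $g'(0)=0$, which holds only for $k\geq 2$ (for $k=1$ one has $h'(0)=2x_0-1$, which can be positive, and the monotone-then-decreasing analysis must be redone --- indeed for $k=1$, $y_0=0$, $x_0>1/2$ one finds exactly two simple zeros in $(0,x_0]$, so the case $k=1$ needs separate comment); and when $y_0>0$ the endpoint $v=0$ is not a zero at all, so your parenthetical about $v=0$ being simple is only relevant in the $y_0=0$ subcase.
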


For the standard initial configuration, the dependence of the limiting fraction $x_{\infty}$ of ignorants on $k$ is illustrated in Figure~\ref{fig:xinfty_k}.
As one would expect, increasing the value of $k$ pushes the value of $x_{\infty}$ towards zero.
Figure~\ref{fig:xinfty_num} shows some cases of the graph of the function~$f$, for $k=3$ and depending on the values of the initial condition.
We present the proof of Theorem~\ref{T: Prop-f} in Section~\ref{S:Proof f}.

\begin{figure}[!htbp]
\begin{center}
\includegraphics[width=4.5cm]{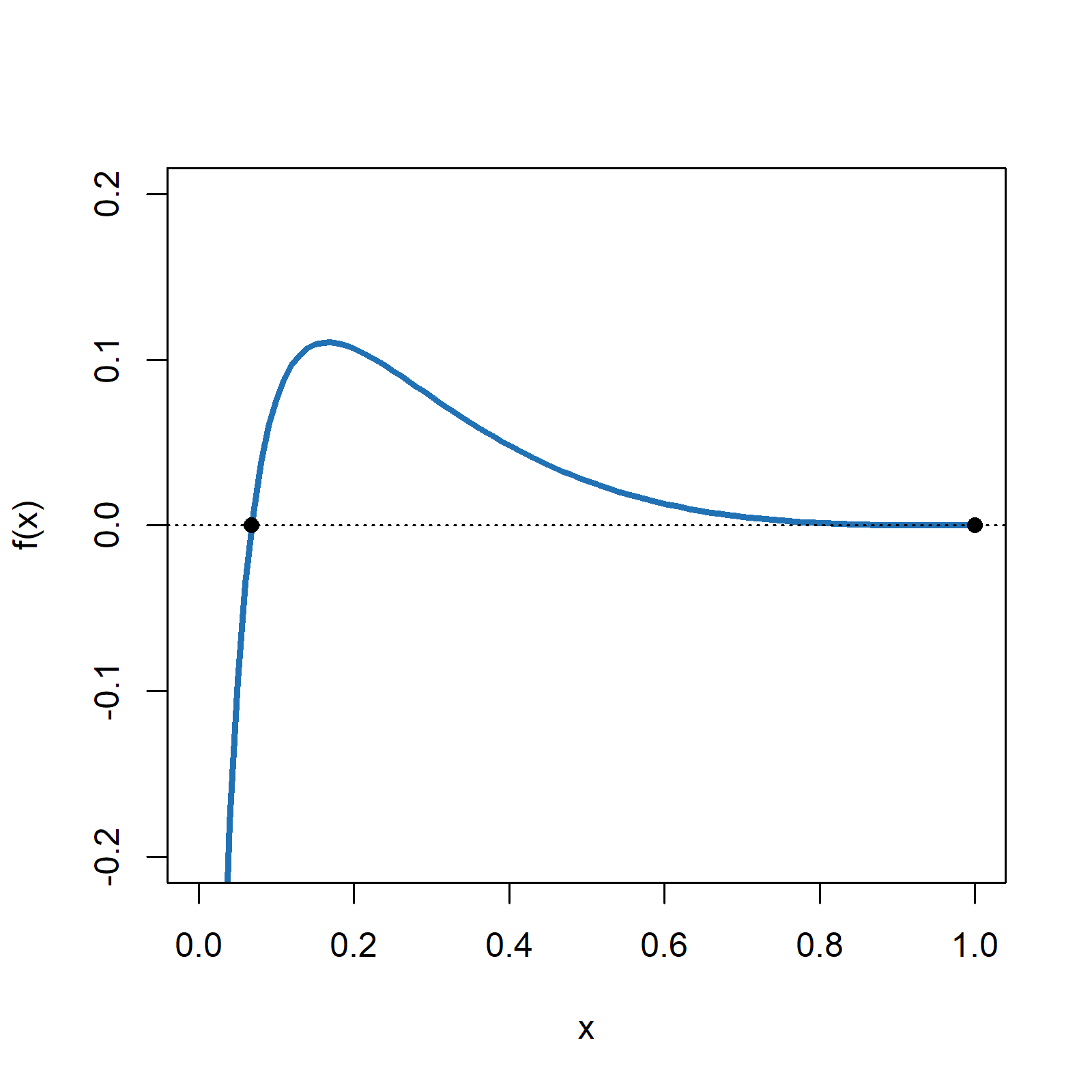}\qquad\includegraphics[width=4.5cm]{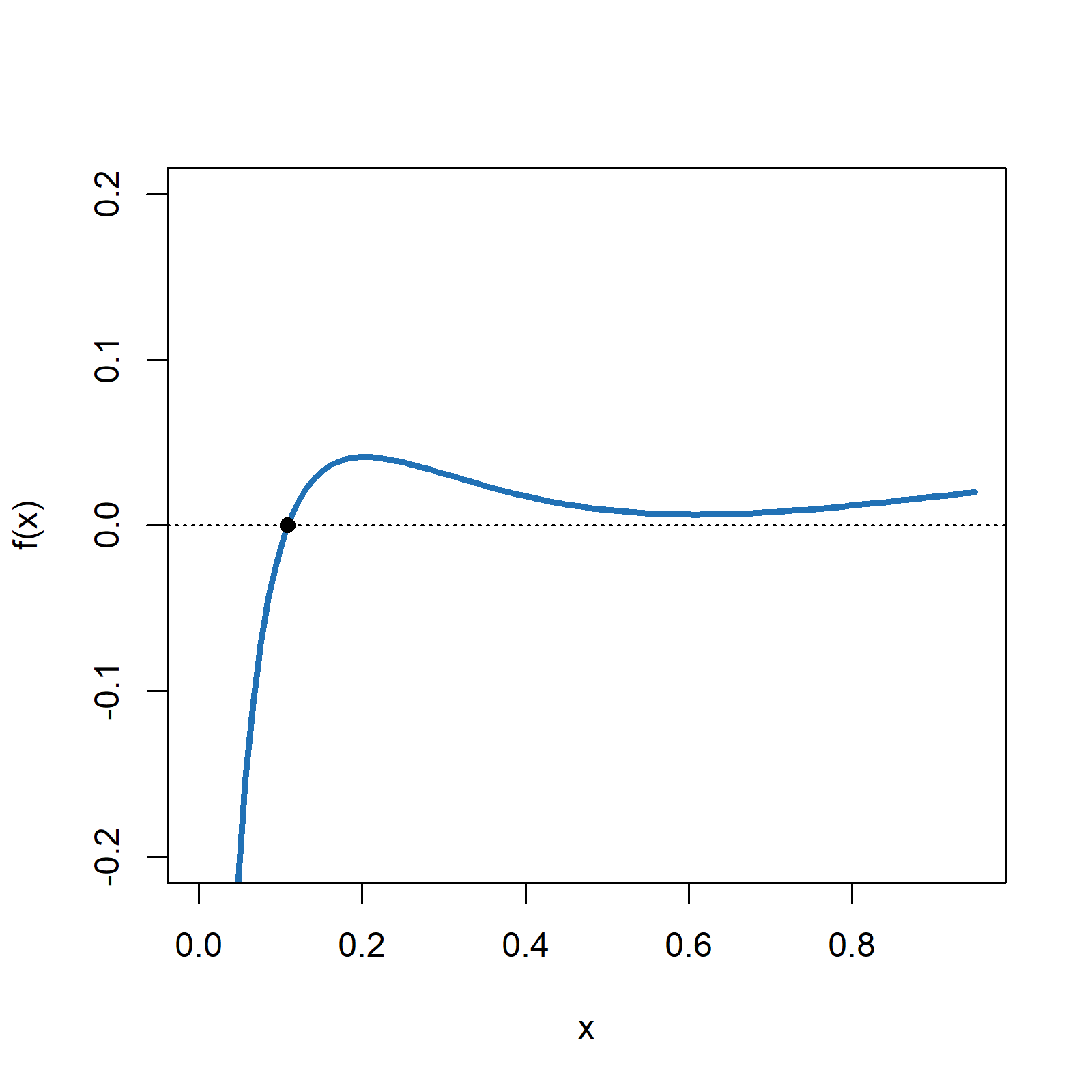}\qquad\includegraphics[width=4.5cm]{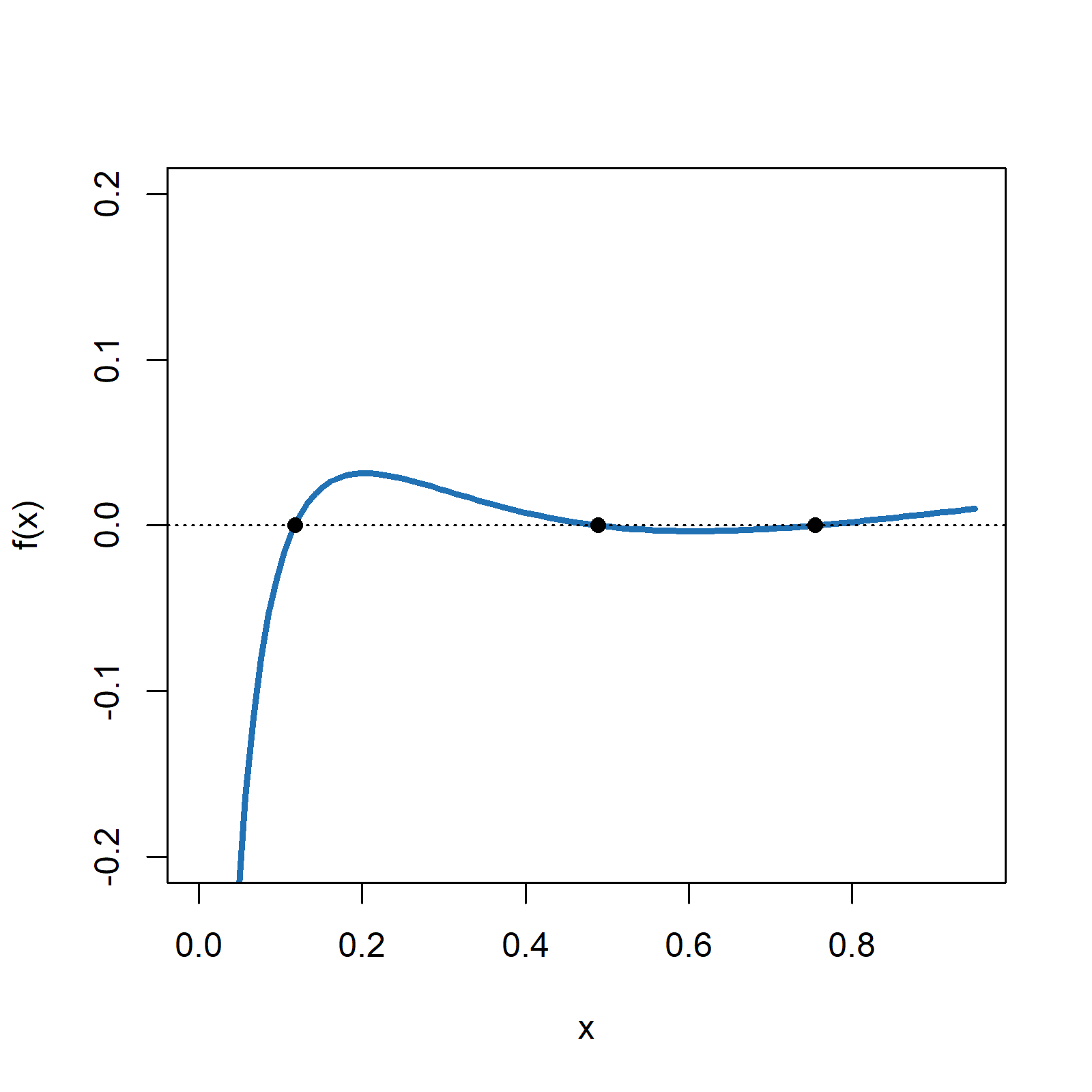}
\caption{Function $f$ for $k=3$. We consider the cases $x_0 = 1$ (standard initial configuration), $x_0 = 0.95, y_0 = 0.02, y_{1,0} = y_{2,0} = 0$ and $x_0 = 0.95, y_0 = 0.01, y_{1,0} = y_{2,0} = 0$, respectively, from the left to the right side. The zeros of the function $f$ are represented by black dots.}
\label{fig:xinfty_num}
\end{center}
\end{figure}

\subsection{On the random fluctuations between the ultimate proportions and their asymptotic values}

Theorem \ref{T: LLN} allows us to describe the asymptotic proportions of ignorants, $i$-aware individuals and stiflers during a given outbreak. As we will see later, the proof is accomplished by applying well-known convergence results from the theory of density dependent Markov chains. 
For a deeper study of this subject, we refer the reader to \cite[Chapter 11]{MPCC}. 
See also \cite[Chapter 5]{draief} for more details of this theory, with applications to epidemic-like processes. 

The main idea is to prove that, as $N$ goes to infinity, the entire trajectories of a suitable coupled version for the Markov chain, rescaled by $N$, converges to a solution of a tractable system of differential equations. It is worth pointing out that the same theory proves to be extremely useful to understand how the random fluctuations between the ultimate proportions and their asymptotic values behave as $N \to \infty$. Indeed, an  analysis similar to that used in \cite{Grejo/Rodriguez/2019,kurtz,lebensztayn/machado/rodriguez/2011a,lebensztayn/machado/rodriguez/2011b} shows that we can obtain a Central Limit Theorem for the $k$-spreading Maki--Thompson model. Under certain conditions, if  $x_{\infty}, y_{1,\infty}, \dots, y_{k - 1,\infty}$ are the values given by Theorem \ref{T: LLN}, then one should be able to prove that
\begin{equation}\label{EQ:geralTCL}
 \sqrt{N} \left(\frac{X^{N}(\tau^{(N)})}{N} - x_\infty, \frac{Y_{1}^{N}(\tau^{(N)})}{N} - y_{1,\infty}, \ldots, \frac{Y_{k-1}^{N}(\tau^{(N)})}{N} - y_{k-1,\infty}   \right) \Rightarrow \dNormal_k(0, \Sigma)
 \end{equation}
as $N \to \infty$, where $ \Rightarrow $ denotes convergence in distribution, and $\dNormal_k(0, \Sigma)$ is a $k$-variate normal distribution with mean zero and whose covariance matrix $\Sigma_{k \times k}$ may be written in terms of the constants $k$, $x_{\infty}$, $y_{1,\infty},\dots, y_{k - 1,\infty}$. A general result like \eqref{EQ:geralTCL} is useful and interesting; however, the method of proof involves many tedious computations. 
For simplicity, we state a Central Limit Theorem for the $k$-spreading Maki--Thompson model only for $k\in\{2,3\}$ and $x_0=1$, but we emphasize that the same arguments may be adapted for less restrictive assumptions.

\begin{theorem}
\label{T: CLT-2}
Consider the $2$-spreading Maki--Thompson model with the standard initial configuration. In this case,
\[ \xf \approx 0.116586 \quad \text{and} \quad
\yuf \approx 0.250558. \]
In addition, 
\[ \sqrt{N} \left(\frac{X^{N}(\tau^{(N)})}{N} - x_\infty, \frac{Y_{1}^{N}(\tau^{(N)})}{N} - y_{1,\infty} \right) \Rightarrow \dNormal_2(0, \Sigma)
\quad \text{as } N \to \infty, \]
where the elements of the covariance matrix $\Sigma$ are given by
\begin{equation}
\label{F: Cov Matrix-2}
{\allowdisplaybreaks
\begin{aligned}
\Sigma_{11} &= \frac{\xf \left(-4 \xf^3+\xf (3-4 \yuf)+4 \yuf^2-5
   \yuf+1\right)}{(\xf+2 \yuf-1)^2} \approx 0.179404,\\[0.1cm]
\Sigma_{12} &= \frac{6 \xf^4-3 \xf^3+\xf^2 (4 \yuf-3)+\xf (5-6 \yuf)
   \yuf-\yuf^2}{(\xf+2 \yuf-1)^2} \approx 0.0585937,\\[0.1cm]
\Sigma_{22} &= \frac{-9 \xf^5+9 \xf^4-3 \xf^3 \yuf+\xf^2 \yuf (9
   \yuf-7)}{\xf (\xf+2 \yuf-1)^2}+{}\\[0.1cm]
&\phantom{=}{}+\frac{\xf \yuf (\yuf+1)-\yuf^3}{\xf (\xf+2 \yuf-1)^2} \approx 0.288678.
\end{aligned}}%
\end{equation}
\end{theorem}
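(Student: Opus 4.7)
The plan is to apply the functional Central Limit Theorem for density-dependent continuous-time Markov chains (Theorem~11.2.3 in \cite{MPCC}), following the strategy of \cite{lebensztayn/machado/rodriguez/2011b} and \cite{Grejo/Rodriguez/2019}. Three stages are needed: set up the density-dependent framework together with a time change that removes the degeneracy at the absorption set $\{y=0\}$; prove a functional CLT on a compact time interval containing the deterministic absorption time; and transfer that CLT from a fixed reference time to the random time $\tau^{(N)}$ by a first-passage argument of delta-method type.

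For $k=2$ the chain is density-dependent with jumps $\ell_1=(-1,1,0)$, $\ell_2=(0,-1,1)$, $\ell_3=(0,0,-1)$ and intensities $\beta_{\ell_1}(x,y_1,y)=xy$, $\beta_{\ell_2}(x,y_1,y)=y_1 y$, $\beta_{\ell_3}(x,y_1,y)=(1-x-y_1)y$. Since every rate carries a vanishing factor of $y$, I would perform the time change $ds = y\,dt$, so that in the new time the process is density-dependent with drift $\tilde F(x,y_1,y)=(-x,\,x-y_1,\,x+2y_1-1)$ and deterministic limit
\[
x(s)=e^{-s}, \qquad y_1(s)=s\,e^{-s}, \qquad y(s)=3-s-(2s+3)\,e^{-s},
\]
obtained by explicit integration of the linear ODE from $(1,0,0)$. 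The unique positive root $s_\infty$ of $y(s)=0$ satisfies $x_\infty=e^{-s_\infty}$, and one checks directly that this recovers the equation $f(x_\infty)=0$ from Proposition~\ref{prop:function}; numerical solution gives $x_\infty\approx 0.116586$ and $y_{1,\infty}=s_\infty e^{-s_\infty}\approx 0.250558$.

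Kurtz's functional CLT then yields weak convergence of $\sqrt{N}(\tilde Z^N-\phi)$ on $[0,s_\infty+\delta]$ to a Gaussian process $V$ whose covariance $\Sigma(s)$ solves the linear matrix ODE $\Sigma'=\partial\tilde F(\phi)\,\Sigma+\Sigma\,\partial\tilde F(\phi)^\top+G(\phi)$, $\Sigma(0)=0$, with $G=\sum_\ell\ell\ell^\top\tilde\beta_\ell$. Because $\tilde F$ is linear and $\phi$ is explicit, $\Sigma(s_\infty)$ is obtained in closed form by direct integration. To pass from $s_\infty$ to the random stopping time $S^{(N)}=\inf\{s\ge 0:\tilde Y^N(s)=0\}$ (corresponding to $\tau^{(N)}$), I would use that $y'(s_\infty)=x_\infty+2y_{1,\infty}-1<0$ and expand the identity $0=\tilde Y^N(S^{(N)})$ to first order about $s_\infty$, obtaining $\sqrt{N}(S^{(N)}-s_\infty)=-V_y(s_\infty)/y'(s_\infty)+o_p(1)$ and hence
\begin{align*}
\sqrt{N}\!\left(\tfrac{X^N(\tau^{(N)})}{N}-x_\infty\right) &= V_x(s_\infty) - \frac{x'(s_\infty)}{y'(s_\infty)}\,V_y(s_\infty)+o_p(1),\\
\sqrt{N}\!\left(\tfrac{Y_1^N(\tau^{(N)})}{N}-y_{1,\infty}\right) &= V_{y_1}(s_\infty) - \frac{y_1'(s_\infty)}{y'(s_\infty)}\,V_y(s_\infty)+o_p(1).
\end{align*}
Pushing the Gaussian vector $(V_x(s_\infty),V_{y_1}(s_\infty),V_y(s_\infty))\sim\dNormal_3(0,\Sigma(s_\infty))$ through this linear map produces the entries of \eqref{F: Cov Matrix-2}; the factor $(x_\infty+2y_{1,\infty}-1)^2=y'(s_\infty)^2$ common to all denominators is exactly the Jacobian square produced by the delta method.

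The principal obstacle I anticipate is the initial phase. Because $Y^N(0)=1$ while $N\to\infty$, the process starts on the absorbing boundary $\{y=0\}$, so Kurtz's hypotheses fail at $s=0$ and the time change $ds=y\,dt$ is initially degenerate. The standard remedy is to couple with a second copy of the chain started from $\lfloor\alpha N\rfloor$ spreaders for small fixed $\alpha>0$, condition on non-extinction during the early branching-like phase (whose duration in the new time scale is $O(\log N/N)$), apply the functional CLT to the coupled chain, and let $\alpha\downarrow 0$. Making this coupling quantitative and verifying that the early-phase fluctuations contribute only $o_p(\sqrt{N})$ and therefore do not perturb the limiting covariance is the most delicate part of the argument.
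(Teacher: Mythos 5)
Your proposal is correct and follows essentially the same route as the paper: the time change $ds=y\,dt$, the resulting density-dependent chain with drift $(-x,\,x-y_1,\,2y_1+x-1)$, the explicit solution $x(s)=e^{-s}$, $y_1(s)=s\,e^{-s}$, $y(s)=f(x(s))$, and the projection of the three-dimensional Gaussian limit onto the first two coordinates via the transversality constant $\df=\xf+2\yuf-1=y'(\tf)$ all coincide with the paper's computation of $\Sigma=B\,\lf\,B^{T}$. The points of divergence are minor. First, you derive the passage from the deterministic time $s_\infty$ to the random absorption time by hand (first-order expansion of $0=\tilde Y^{N}(S^{(N)})$), whereas the paper invokes Theorem~11.4.1 of Ethier and Kurtz, which packages exactly this first-passage delta method; likewise your Lyapunov ODE for the covariance is the differential form of the paper's integral formula $\int_0^{t}\Phi(t,s)\,G\,\Phi(t,s)^{T}\,ds$. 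Second, the ``principal obstacle'' you anticipate is simply not treated in the paper: under the standard initial configuration $y_0=0$ and $y'(0)=2y_{1,0}+x_0-1=0$, so the limiting path starts on the boundary $\{y=0\}$ tangentially, and the authors apply Theorem~11.4.1 without comment. Your instinct that something must be said here is sound, though the remedy can be lighter than a full coupling-and-conditioning scheme: in the time-changed chain the stifling rate vanishes at the initial state, the probability of early absorption tends to $0$ as $N\to\infty$, and what really needs checking is that the hitting time of $\{y=0\}$ selected by the first-passage argument is the transversal root $s_\infty$ rather than $s=0$.
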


The density of the limiting bivariate normal distribution and the corresponding contour plot are depicted in Figure~\ref{Fig: Cont}. We observe that, as proved by Sudbury~\cite{Sudbury} and Watson~\cite{Watson}, for the standard ($1$-spreading) Maki--Thompson model, the ultimate proportion of ignorants and the variance of the asymptotic normal distribution in the CLT are $0.203188$ and $0.272736$, respectively. As an additional remark we point out that Daley and Kendall \cite{DK} provided an heuristic derivation of the mean and variance of an asymptotic CLT of  $0.203188$  and  $0.3106724$, which differs from the figures for the CLT of the Maki--Thompson model.

\begin{figure}[!htbp]
\centering
\includegraphics{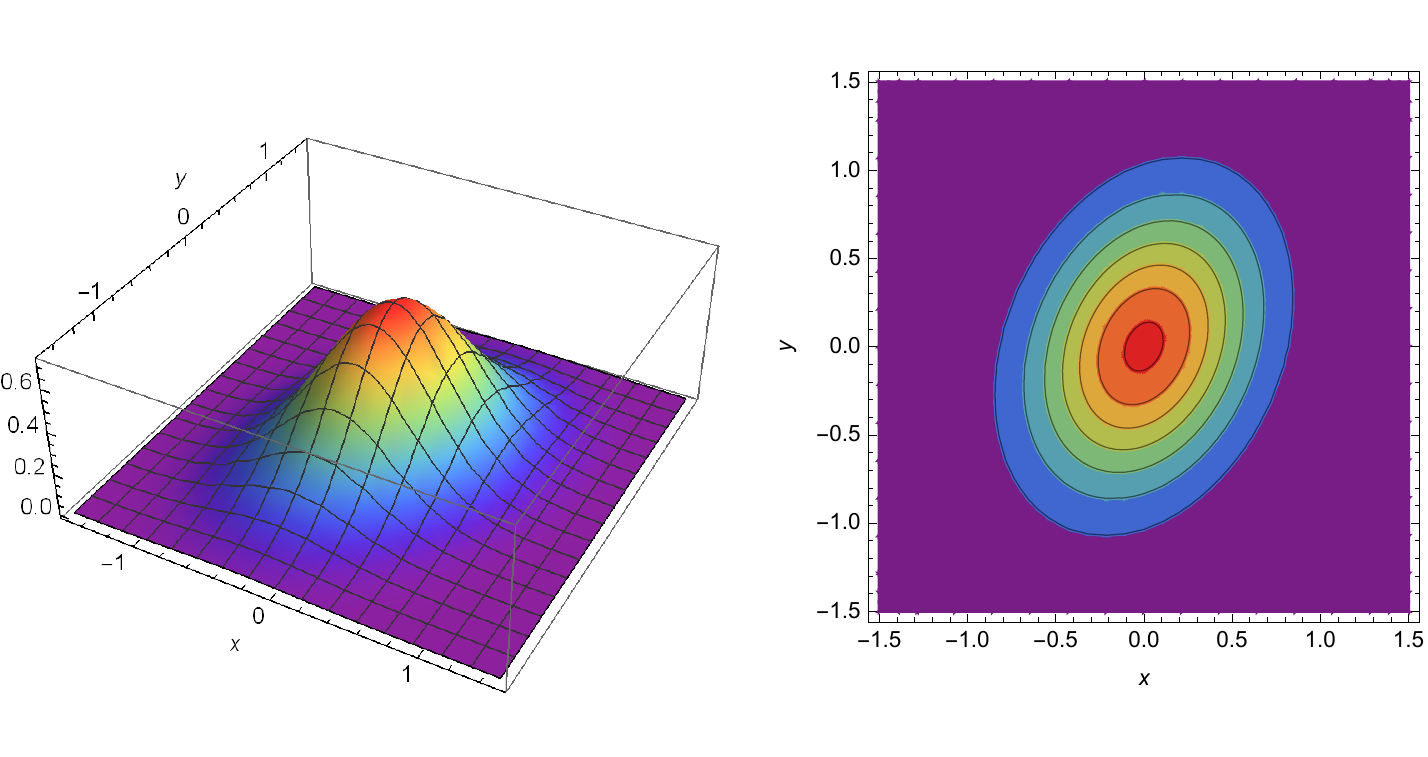}
\caption{Density and contour plot of the asymptotic bivariate normal density in Theorem~\ref{T: CLT-2}.}
\label{Fig: Cont}
\end{figure}

\begin{theorem}
\label{T: CLT-3}
Consider the $3$-spreading Maki--Thompson model with the standard initial configuration. In this case,
\[ \xf \approx 0.0680169, \quad
\yuf \approx 0.182829 \quad \text{and} \quad
\ydf \approx 0.245723. \]
Furthermore, 
\[ \sqrt{N} \left(\frac{X^{N}(\tau^{(N)})}{N} - x_\infty, 
\frac{Y_{1}^{N}(\tau^{(N)})}{N} - y_{1,\infty},
\frac{Y_{2}^{N}(\tau^{(N)})}{N} - y_{2,\infty} \right) \Rightarrow \dNormal_3(0, \Sigma)
\quad \text{as } N \to \infty, \]
where
\begin{equation}
\label{F: Cov Matrix-3}
\Sigma \approx 
\begin{pmatrix}
0.111645 & 0.0690173 & 0.0279058 \\
0.0690173 & 0.286895 & 0.0303917 \\
0.0279058 & 0.0303917 & 0.226601 \\
\end{pmatrix}.
\end{equation}
\end{theorem}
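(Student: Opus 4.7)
\textbf{The overall plan} is to establish Theorem \ref{T: CLT-3} by the same strategy that proves the $k=2$ case of Theorem \ref{T: CLT-2}, adapted to the four-dimensional setting. The ingredients are Theorems \ref{T: LLN} and \ref{T: Prop-f} for the limiting constants, combined with the density dependent Markov chain framework of \cite{MPCC} and a monotone time change to handle the fact that, in the deterministic limit, $y(t) \to 0$ only as $t \to \infty$.

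\textbf{Limiting constants.} For $k=3$ and the standard initial configuration, Theorem \ref{T: Prop-f}(i) yields the closed form \eqref{F: f-CIP} for the function $f$ of \eqref{func:louca}. Numerically locating the smaller zero of $f$ on $(0,1)$ produces $\xf \approx 0.0680169$. Substituting into the formula for $y_{i,\infty}$ in Theorem \ref{T: LLN} (with $y_{0,0}=1$, $y_{1,0}=y_{2,0}=0$) gives $\yuf = \xf(-\log \xf) \approx 0.182829$ and $\ydf = \xf(-\log \xf)^2/2 \approx 0.245723$.

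\textbf{CLT framework.} After the customary rescaling of time, the four-dimensional process $\xi^N := (X^N, Y_1^N, Y_2^N, Y^N)/N$ is a density dependent Markov chain with jumps $\ell_1 = -{\bf e}_1 + {\bf e}_2$, $\ell_2 = -{\bf e}_2 + {\bf e}_3$, $\ell_3 = -{\bf e}_3 + {\bf e}_4$, $\ell_4 = -{\bf e}_4$ and rates $\beta_1(\xi) = xy$, $\beta_2(\xi) = y_1 y$, $\beta_3(\xi) = y_2 y$, $\beta_4(\xi) = (1 - x - y_1 - y_2)y$, read off \eqref{EQ:transitions}. Setting $F = \sum_j \ell_j \beta_j$ and $G = \sum_j \ell_j \ell_j^T \beta_j$, the standard LLN and CLT for density dependent chains (\cite{MPCC}) give $\xi^N \to \xi$ uniformly on $[0,T]$ and $\sqrt{N}(\xi^N - \xi) \Rightarrow V$ in $D([0,T])$, where $V$ is the centered Gaussian process solving $dV = \partial F(\xi) V\, dt + \sigma(\xi)\, dW$ with $\sigma \sigma^T = G$. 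To transfer this to the random time $\tau^{(N)}$, I would reparametrize by $u = -\log(X^N/N)$: the limiting trajectory then reaches $(\xf, \yuf, \ydf, 0)$ at the finite time $u_\infty = -\log \xf$, and in the new variable the $y_i$-equations decouple from $y$ into the linear cascade $dy_i/du = y_{i-1} - y_i$. Applying the density dependent theorems on a compact $u$-interval $[0,U]$ with $U > u_\infty$, and then a delta-method expansion at $u_\infty$ (justified because $\tilde \xi'_4(u_\infty) = \xf + \yuf + 2\ydf - 1 \neq 0$), yields the CLT at $\tau^{(N)}$.

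\textbf{Covariance and main obstacle.} The resulting $\Sigma$ is the covariance of the random vector $(W_1, W_2, W_3)$, where
\[
W_i := V_i(u_\infty) - \frac{\tilde \xi'_i(u_\infty)}{\tilde \xi'_4(u_\infty)}\, V_4(u_\infty), \qquad i=1,2,3,
\]
and the covariance of $V(u_\infty)$ is
\begin{equation*}
\mathrm{Var}(V(u_\infty)) = \Phi(u_\infty) \int_0^{u_\infty} \Phi(s)^{-1}\, \tilde G(\tilde \xi(s))\, \bigl(\Phi(s)^{-1}\bigr)^T\, ds\, \Phi(u_\infty)^T,
\end{equation*}
with $\Phi$ the fundamental matrix of $\Phi' = \partial \tilde F(\tilde \xi)\, \Phi$, $\Phi(0) = I$, in the reparametrized flow. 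The main obstacle is not conceptual but purely computational: for $k=3$ this linear-algebra bookkeeping is substantially more tedious than in the $k=2$ case, since $\partial \tilde F(\tilde \xi(u))$ is $4 \times 4$ with $u$-dependent entries. The cascade structure makes the first three rows of $\Phi$ explicit in closed form, after which the remaining integrals and the delta-method correction can be evaluated symbolically and then numerically, producing the entries of \eqref{F: Cov Matrix-3}.
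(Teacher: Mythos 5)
Your proposal is correct and follows essentially the same route as the paper: a time change that removes the factor $y$ from the transition rates (so that the limiting flow reaches $y=0$ at the finite time $u_\infty=-\log\xf$), the density dependent LLN/CLT of Ethier--Kurtz on a compact time interval, and the hitting-time delta method under the nondegeneracy condition $\xf+\yuf+2\ydf-1\neq 0$, followed by symbolic/numerical evaluation of the covariance integral $\int_0^{u_\infty}\Phi\, G\, \Phi^T\,ds$ and the correction matrix $B$. The only cosmetic difference is that the paper implements the time change rigorously as the random time change by the additive functional $\int_0^t Y^{(N)}(s)\,ds$, which produces a genuine Markov chain with rates $X$, $Y_1$, $Y_2$, $N+1-X-Y_1-Y_2$, rather than your reparametrization by $-\log(X^N/N)$; at the level of the limiting ODE, the hitting time, and the covariance formulas the two descriptions coincide.
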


We present only the numerical values of the elements of $\Sigma$, because the expressions in terms of $\xf$, $\yuf$ and $\ydf$ are too long to write out explicitly.
The proofs of Theorems~\ref{T: CLT-2} and \ref{T: CLT-3} are presented in Section~\ref{S:Proofs CLT}.

\section{Proof of Theorem \ref{T: LLN}}\label{S:proofLLN}

From now on, we use the notation $\{\mathcal{R}(t)\}_{t\in[0,\infty)}$ for the $k$-spreading Maki--Thompson rumor model. The main idea of the proof is to define a new process $\{\tilde{\mathcal{R}}(t)\}_{t\in[0,\infty)}$ with the same transitions as $\{\mathcal{R}(t)\}_{t\in[0,\infty)}$ such  that they end at the same point. The interest in the comparison with this new process is that it allows us to apply well-known convergence results of density dependent Markov chains. We first prove that, as $N \to \infty$, the entire trajectories of $\{\tilde{\mathcal{R}}(t)\}_{t\in[0,\infty)}$, rescaled by $N$, have as limit a set of differential equations that we can study. Then the asymptotic proportions of interest are obtained from  analysis of this dynamical system. 
Thorough expositions of the theory of density dependent Markov chains are given in~\cite[Chapter 11]{MPCC} and \cite[Chapter 5]{draief}.
In what follows, we develop a similar approach to those in \cite{kurtz,Grejo/Rodriguez/2019,lebensztayn/machado/rodriguez/2011a,lebensztayn/machado/rodriguez/2011b}.

\subsection{The time-changed process and the limit dynamical system}
\label{SS: TCP}

Consider the $k$-spreading Maki--Thompson rumor model $\{\mathcal{R}^{(N)}(t)\}_{t\in[0,\infty)}$, and define the absorption time of the process by
\begin{equation*}
\tau^{(N)}=\inf\{t\geq 0: Y^{(N)}(t)=0\}.
\end{equation*}
Since the event $\{Y^{(N)}(t)=0\}$ implies that there are no spreaders in the population at time $t$, $\tau^{(N)}$ represents the time when the process ends. We define for $0\leq t \leq \int_0^{\infty} Y^{(N)}(s) ds$,
\begin{equation*}
\gamma^{(N)}(t):= \inf \left\{u\in[0,\tau^{(N)}): \int_0^u Y^{(N)}(s) ds>t\right\},
\end{equation*}
and note that 
\[ \int_0^{\gamma^{(N)}(t)} Y^{(N)}(s) ds=t. \]
Let $\tilde{\mathcal{R}}^{(N)}(t)=(\tilde{X}^{(N)}(t),\tilde{Y}^{(N)}_1(t), \ldots,\tilde{Y}^{(N)}_{k-1}(t), \tilde{Y}^{(N)}(t)):=\mathcal{R}^{(N)}(\gamma^{(N)}(t))$. Thus defined, the time-changed process $\{\tilde{\mathcal{R}}^{(N)}(t)\}_{t\in[0,\infty)}$ is a continuous-time Markov chain with the same transitions as $\{\mathcal{R}^{(N)}(t)\}_{t\in[0,\infty)}$. Namely,

\begin{equation}\label{taxas}
{\allowdisplaybreaks
\begin{array}{rccl}
&\text{increment} \quad &\text{rate} &\\[0.3cm]
\ell_0:=&- {\bf e}_1 + {\bf e}_2 \quad &\tilde{X}, &\\[0.3cm]
\ell_i:=&- {\bf e}_{i+1} + {\bf e}_{i+2} \quad &\tilde{Y}_i, &i\in\{1,2,\ldots,k-1\} \\[0.1cm]
\ell_k:=&- {\bf e}_{k+1} \quad &(N+1-\tilde{X}-\displaystyle\sum_{i=1}^{k-1}\tilde{Y}_i),& 
\end{array}}%
\end{equation}
where $\{{\bf e}_1, {\bf e}_2,\ldots,{\bf e}_{k+1}\}$ is the natural basis of the $(k+1)$-dimensional Euclidean space. Because this new Markov chain has the same transitions as the process $\{\mathcal{R}^{(N)}(t)\}_{t\in[0,\infty)}$, if these two processes start at the same point, they will also be absorbed at the same point. Formally, if $\tilde{\tau}^{(N)}:=\inf\{t\geq 0: \tilde{Y}^{(N)}(t)=0\}$, then
\begin{equation*}
\tilde{\mathcal{R}}^{(N)}\left(\tilde{\tau}^{(N)}\right)=\mathcal{R}^{(N)}\left(\tau^{(N)}\right).
\end{equation*}

Therefore, to prove Theorem \ref{T: LLN}, it is enough to prove limit theorems for the coordinates of $\tilde{\mathcal{R}}^{(N)}\left(\tilde{\tau}^{(N)}\right)/N$. We do this by noting that the new process is a density dependent Markov chain. That is, if we consider the functions
\begin{equation}
\label{F: Beta}
\begin{aligned}
\beta_{\ell_0}(x,y_1,\ldots,y_{k-1}, y)&=x,\\[0.1cm]
\beta_{\ell_i}(x,y_1,\ldots,y_{k-1}, y)&=y_i \text{ for }i\in\{1,\ldots,k-1\},\\[0.1cm]
\beta_{\ell_k}(x,y_1,\ldots,y_{k-1}, y)&=1-x-\displaystyle\sum_{i=1}^{k-1}y_i,
\end{aligned}
\end{equation}
the rates in \eqref{taxas} can be written as $N\beta_{\ell_i}(\tilde{X}/N,\tilde{Y}_1/N,\dots,\tilde{Y}_{k-1}/N,\tilde{Y})+O(1/N)$ for $i\in\{0,1,\dots,k\}$. 
Now we apply Theorem~11.2.1 from \cite{MPCC} to $\{\tilde{\mathcal{R}}^{(N)}(t)\}_{t\in[0,\infty)}$, which gives the characterization of a system of ordinary differential equations for which the scaled stochastic system converges almost surely, on bounded time intervals. So, following \cite{MPCC}, we identify the drift function associated to $\{\tilde{\mathcal{R}}^{(N)}(t)\}_{t\in[0,\infty)}$ as the function given by
\[ F(x,y_1,\ldots,y_{k-1}, y) = \sum_{i = 0}^{k} \ell_i \, \beta_{\ell_i}(x,y_1,\ldots,y_{k-1}, y). \]
Thus, if $v(t)=(x(t),y_{1}(t),\ldots,y_{k-1}(t), y(t))$ for $t \in [0,\infty)$ is such that  
\[ v(t):=v(0)+ \int_{0}^{t}F(v(s))\,ds, \quad t\geq 0, \]
and $v(0)=(x_0,y_{1,0},\ldots,y_{k-1,0}, y_0)$, then Theorem~11.2.1 from \cite{MPCC} guarantees that for every $t\geq 0$,
\begin{equation}\label{EQ:limitStoch_det}
\lim_{N\to \infty} \sup_{s\leq t} \left\|\frac{\tilde{\mathcal{R}}^{(N)}(s)}{N} - v(s)\right\| =0, \text{ a.s.}
\end{equation}
Consequently, the scaled stochastic system converges almost surely on bounded time intervals to the solution of the following limit dynamical system:
\begin{equation}
\begin{cases}\label{EQ:limitsystemODE}
x^{\prime}(t) = - x(t), \\[0.3cm]
y_1^{\prime}(t) = x(t)-y_1(t), \\[0.3cm]
y_i^{\prime}(t) = y_{i-1}(t)-y_i(t), \text{ for }i\in\{2,\ldots,k-1\} \\[0.1cm]
y^{\prime}(t) = y_{k-1}(t)-\left(1-x(t)-\displaystyle\sum_{i=1}^{k-1}y_i(t)\right), \\[0.3cm]
x(0)=x_0, y_i(0)=y_{i,0} \text{ for } i\in\{1,\ldots,k-1\}, y(0)=y_0.
\end{cases}
\end{equation}

\begin{remark}
Recall that the classical Maki--Thompson model is obtained from the $k$-spreading Maki--Thompson model by setting $k = 1$ and considering the standard initial configuration.
In this case, the system~\eqref{EQ:limitsystemODE} reduces to
\begin{equation}\label{EQ:limitMTS}
\left\{
\begin{array}{l}
x^{\prime}(t) = - x(t),\\[0.2cm]
y^{\prime}(t) = 2x(t)-1,\\[0.2cm]
x(0)=1, y(0)=0.
\end{array}\right.
\end{equation}
For the system~\eqref{EQ:limitMTS}, we may deduce directly the trajectory in the $(x, y)$-plane.
Indeed, \eqref{EQ:limitMTS} yields
\[ \frac{\partial y}{\partial x} = -2 + \frac{1}{x}, \, y(1) = 0, \]
so that
\[ y = f(x) = 2 \, (1-x) + \ln x, \, x \in \intfzu. \]
This is the function $f$ given in Equation~\eqref{F: f-CIP} with $k=1$ (see Theorem~\ref{T: Prop-f}).
Notice also that
\[ y = 0 \iff x = 1 \text{ or } x = \xf \approx 0.203188. \]
Summarizing, a fundamental point in the proof of Theorem~\ref{T: LLN} is to define the time-changed coupled version $\{\tilde{\mathcal{R}}(t)\}_{t\in[0,\infty)}$ of the original processes $\{\mathcal{R}(t)\}_{t\in[0,\infty)}$, both stochastic systems having the same embedded chain.
This allows us to apply the machinery of the density dependent population process and to obtain the dynamical system~\eqref{EQ:limitsystemODE}, whose solution we can compute.
\end{remark}

\subsection{The solution of the dynamical system}
\label{SS: Solution}

A simple computation shows that $x(t)=x_0 e^{-t}$. Since $y_1^{\prime}(t) + y_1(t) = x(t)$ with
$y_1(0)=y_{1,0}$ is a linear differential equation of first order, it can be solved by standard methods. It is not difficult to show that $y_1(t)=e^{-t} \left(x_0 t +y_{1,0} \right)$ is the only solution to this equation. The same technique applied now to the equation $y_i^{\prime}(t) + y_i(t) = y_{i-1}$ with $y_i(0)=y_{i,0}$, yields 
\begin{equation*}
y_i(t) =  e^{-t} \displaystyle \sum_{r=0}^i y_{i-r,0} \frac{t^r}{r!}, \text{ for }i\in\{2,\ldots,k-1\}.
\end{equation*}

Now, the differential equation for $y(t)$ reads
\begin{equation*}
y^{\prime}(t) = e^{-t} \displaystyle \sum_{r=0}^{k-1} y_{k-1-r,0} \frac{t^r}{r!} + x_0 e^{-t} + \sum_{i=1}^{k-1} \left(e^{-t} \displaystyle \sum_{r=0}^i y_{i-r,0} \frac{t^r}{r!}\right) - 1.
\end{equation*}
Observe that $y_i(t) =  e^{-t} P_i(t)$ where 
\[ P_i(t):=\displaystyle \sum_{r=0}^i y_{i-r,0} \frac{t^r}{r!}. \]
Define $P_i^{(s)}(t) := {d^s P_i(t)}/{dt^s}$. Since
\begin{align*}
P_i^{(1)}(t) &= \displaystyle \sum_{j=1}^i y_{i-j,0} \frac{t^{j-1}}{(j-1)!}\\
&= \displaystyle \sum_{j=1}^i y_{(i-1)-(j-1),0} \frac{t^{j-1}}{(j-1)!}\\
&= \displaystyle \sum_{l=0}^{i-1} y_{(i-1)-l,0} \frac{t^l}{l!}\\
&= P_{i-1}(t),
\end{align*}
we conclude that
\begin{equation}\label{s-derivada}
P_i^{(s)}(t) = P_{i-s}(t).
\end{equation}
Thus,
\begin{align*}
\displaystyle \int y_i(t) dt &=  \displaystyle \int e^{-t} P_i(t) dt \\
&= -e^{-t} P_i(t) + \displaystyle \int e^{-t} P_i^{(1)} dt \\
&= -e^{-t} P_i(t) + \displaystyle \int e^{-t} P_{i-1} dt,
\end{align*}
where the last line is obtained from~\eqref{s-derivada}. 
Using a recursive argument yields
\begin{align}\label{integral-ys}
\int y_i(t) dt 
&= -e^{-t} \displaystyle \sum_{s=0}^{i-1} P_{i-s}(t) - x_0 e^{-t} \nonumber \\
&= - \sum_{s=0}^{i-1}y_{i-s}(t)  - x(t)\nonumber\\
&= - \sum_{s=1}^{i}y_s(t) -x(t).
\end{align}



But $y'(t)= \displaystyle \sum_{i=1}^{k-1} y_i(t) + y_{k-1}(t) + x(t) - 1$, so using \eqref{integral-ys}, it follows that
\begin{align}\label{yt}
y(t) &= \int y'(t) dt \nonumber\\
&= \sum_{i=1}^{k-1} \int y_i(t) + \int y_{k-1}(t) + \int x(t)-t,\nonumber\\
&= -\sum_{i=1}^{k-1} \left(\sum_{s=1}^{i}y_s(t)+x(t)\right)-\sum_{s=1}^{k-1} y_s(t)-2x(t)-t + c\nonumber\\
&= -\sum_{i=1}^{k-1}\sum_{s=1}^{i}y_s(t)-\sum_{s=1}^{k-1} y_s(t)-(k+1) x(t)-t + c \nonumber\\
&= -\sum_{s=1}^{k-1} (k-s+1) y_s(t) - (k+1) x(t) - t + c,
\end{align}
where $c$ is a constant that depends on the initial condition.
By setting $t = 0$ (recall that $x(0) = x_0 = y_{0,0}$), we get
\begin{equation}
\label{c}
c = y_0 + \rho(0),
\end{equation}
where $\rho(0)$ is given by \eqref{eq:rho}. Thus, replacing \eqref{c} in \eqref{yt}, we obtain 
\begin{equation}
\label{yt-final}
y(t) = y_0 + \rho(0) - \sum_{s=1}^{k-1} (k-s+1) y_s(t) - (k+1) x(t) - t.
\end{equation}

For the sake of simplicity, in our analysis, we shall write $y(t)$ as a function of $x(t)$, that is, we prove that $y(t) = f(x(t))$, where $f$ is given by~\eqref{func:louca}.
Now let $x:=x(t)=x_0 e^{-t}$, so that
\begin{equation}
\label{eq:t}
t=\ln (x_0/x),
\end{equation}
and we may express $y_i(t)$ as a function of $x$ by
\begin{equation}
\label{yi-con-x}
y_i(t) = \frac{x}{x_0}\sum_{r=0}^{i}y_{i-r,0}\frac{(\ln (x_0/x))^r}{r!}, \, i \in \{1,\dots,k-1\}.
\end{equation}
Consequently, substituting from \eqref{eq:t} and \eqref{yi-con-x} into \eqref{yt-final} gives
{\allowdisplaybreaks
\begin{align*}
y(t) &= y_0 + \rho(0) - \frac{x}{x_0} \sum_{s=0}^{k-1} \sum_{r=0}^{s} (k-s+1) y_{s-r,0} \frac{(\ln (x_0/x))^r}{r!} - \ln (x_0/x)\\
&= y_0 + \rho(0) - \frac{x}{x_0} \sum_{r=0}^{k-1} \frac{(\ln (x_0/x))^r}{r!} \sum_{s=r}^{k-1} (k-s+1) y_{s-r,0} - \ln (x_0/x)\\
&= y_0 + \rho(0) - \ln (x_0/x) - \frac{x}{x_0} \sum_{r=0}^{k-1} \frac{\rho(r)}{r!} (\ln (x_0/x))^r 
= f(x(t)).
\end{align*}}%
Therefore, the solution of system \eqref{EQ:limitsystemODE} is given by
\begin{equation}
\begin{cases}
\label{EQ:sollimitsystemODE}
x(t) = x_0 e^{-t},\\
y_i(t) = \dfrac{x(t)}{x_0} \displaystyle\sum_{r=0}^{i} \dfrac{y_{i-r,0} \, t^r}{r!}
\quad \text{for} \quad i \in \{1,\dots,k-1\},\\
y(t) = f(x(t)).
\end{cases}
\end{equation}

\subsection{Final steps to prove Theorem \ref{T: LLN}}

Consider the $k$-spreading Maki--Thompson model $\left(\mathcal{R}^{(N)}(t)\right)_{t\geq 0}$, its time-changed coupled version $(\tilde{\mathcal{R}}^{(N)}(t))_{t\geq 0}$, and the deterministic system $(v(t))_{t\geq 0}$ whose coordinates are given by \eqref{EQ:sollimitsystemODE}. By \eqref{EQ:limitStoch_det} we have that for every $t\geq 0$,
\begin{equation}\label{EQ:XlimitStoch_det}
\lim_{N\to \infty} \sup_{s\leq t} \left|\frac{\tilde{X}^{(N)}(s)}{N} - x(s)\right| =0, \text{ a.s.}
\end{equation}
and
\begin{equation}\label{EQ:YlimitStoch_det}
\lim_{N\to \infty} \sup_{s\leq t} \left|\frac{\tilde{Y}^{(N)}(s)}{N} - y(s)\right| =0, \text{ a.s.}
\end{equation}
Thus, if we define $\tau_{\infty}:=\inf\{t\geq 0: y(t)\leq 0\}$, we deduce from \eqref{EQ:YlimitStoch_det} that
\begin{equation}\label{EQ:tau}
\lim_{N\to \infty} \tilde{\tau}^{(N)} = \tau_{\infty}, \text{ a.s. }
\end{equation}
Now recall that $y(t)=f(x(t))$, so $\tau_{\infty}=\inf\{t\geq 0: f(x(t))\leq 0\}$.
In addition, $x(t)$ is a decreasing function, hence it establishes a one-to-one correspondence between $[0, \infty)$ and $(0, x_0]$. 
The definition of $x_\infty$ implies that $x_{\infty} = x(\tau_{\infty})$.
Putting all together, the last remark, \eqref{EQ:XlimitStoch_det} and \eqref{EQ:tau} imply that 
\[ \lim_{N\to \infty} \frac{X^{(N)}(\tau^{N})}{N} = \lim_{N\to \infty} \frac{\tilde X^{(N)}(\tilde \tau^{N})}{N} = x_{\infty} \text{ a.s.}, \]
and the proof of Theorem \ref{T: LLN} is complete.

\section{Proof of Theorem \ref{T: Prop-f}}
\label{S:Proof f}

For $t \geq 0$, define
\[ \phi(t) = f(x_0 \, e^{-t}) = y_0 + \rho(0) - t 
- e^{-t} \sum_{r=0}^{k-1}\frac{\rho(r)}{r!} \, t^r. \]
Of course, $\phi(t)$ is equal to $y(t) = f(x(t))$ given in~\eqref{EQ:sollimitsystemODE}, but we prefer to use different notation, to keep the sections independent.
Notice that there is a one-to-one correspondence between $\intfzi$ and $\intfzx$, so that 
$f(x)= \phi(\ln(x_0/x))$.

We apply a probabilistic argument to show formula~\eqref{F: f-CIP}. As usual, for an event $A$, we write $E(R; A) = E(R \, I_A)$. We use the following result.

\begin{lemma}
Let $R$ be a random variable with Poisson distribution with parameter $t > 0$.
For every integer $k \geq 1$,
\begin{subequations}
\label{F: Poisson}
\begin{equation}
\label{F: Poisson-P}
P(R \geq k) = \frac{\gamma(k, t)}{(k-1)!}, \quad \text{and}
\end{equation}

\begin{equation}
\label{F: Poisson-E}
E(R; R > k) = t \, P(R \geq k).
\end{equation}
\end{subequations}
\end{lemma}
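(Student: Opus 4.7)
The plan is to prove both identities by direct manipulation of the Poisson probability mass function, treating (a) first and then using it together with a re-indexing argument to deduce (b).

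For part (a), I would start from the tail probability $P(R \geq k) = e^{-t} \sum_{j=k}^{\infty} t^{j}/j!$, or equivalently write
\[ P(R \geq k) = 1 - e^{-t}\sum_{j=0}^{k-1}\frac{t^j}{j!}. \]
The goal is to show this equals $\gamma(k,t)/(k-1)!$. The cleanest route is integration by parts applied to $\gamma(k,t) = \int_0^t u^{k-1} e^{-u}\,du$: integrating $u^{k-1}$ and differentiating $e^{-u}$ produces the recursion $\gamma(k,t) = (k-1)\,\gamma(k-1,t) - t^{k-1} e^{-t}$, and iterating this recursion $k-1$ times down to $\gamma(1,t) = 1 - e^{-t}$ yields
\[ \gamma(k,t) = (k-1)!\left(1 - e^{-t}\sum_{j=0}^{k-1}\frac{t^j}{j!}\right), \]
which is precisely $(k-1)!\,P(R \geq k)$. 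Alternatively, one can verify the identity by induction on $k$: the base case $k=1$ reads $\gamma(1,t) = 1 - e^{-t} = P(R \geq 1)$, and the inductive step uses the same recursion for $\gamma$ together with the telescoping $P(R \geq k+1) = P(R \geq k) - e^{-t} t^k/k!$.

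For part (b), the argument is a one-line re-indexing. Writing the expectation as a series,
\[ E(R; R > k) = \sum_{j=k+1}^{\infty} j \cdot e^{-t}\,\frac{t^j}{j!} = t\sum_{j=k+1}^{\infty} e^{-t}\,\frac{t^{j-1}}{(j-1)!} = t\sum_{i=k}^{\infty} e^{-t}\,\frac{t^i}{i!} = t\,P(R \geq k), \]
where in the middle equality we cancel one factor of $j$ against $j!$ and pull out $t$, and then substitute $i = j-1$. This gives exactly \eqref{F: Poisson-E}.

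I do not foresee a genuine obstacle here; both statements are standard facts about the Poisson distribution. The only point requiring a little care is the matching of index conventions, specifically that $E(R; R > k)$ sums from $j = k+1$ while the resulting tail $P(R \geq k)$ starts at $i = k$, which is exactly what makes the factor of $t$ appear cleanly. Once part (a) is in place, it can be invoked in the later application to rewrite $E(R; R > k)$ in terms of $\gamma(k,t)$ without further computation.
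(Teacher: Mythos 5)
Your proof is correct and follows essentially the same route as the paper: your re-indexing in part (b) is exactly the computation the paper phrases via the ratio $P(R=r+1)/P(R=r)=t/(r+1)$, and for part (a) the paper merely cites the Poisson--Gamma relationship as well known, whereas you supply the integration-by-parts derivation explicitly. One cosmetic slip: the recursion $\gamma(k,t)=(k-1)\,\gamma(k-1,t)-t^{k-1}e^{-t}$ comes from differentiating $u^{k-1}$ and integrating $e^{-u}$, not the other way around as you describe, but the formula you actually use is the correct one.
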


\begin{proof}
Formula~\eqref{F: Poisson-P} is the well-known relationship between Poisson and Gamma distributions.
To prove~\eqref{F: Poisson-E}, notice that
\[ \frac{P(R = r + 1)}{P(R = r)} = \frac{t}{r + 1}. \]
From this, it follows that
\[ E(R; R > k) = \sum_{r=k}^{\infty} (r + 1) \, P(R = r + 1)
= t \, P(R \geq k).\]
\end{proof}

\begin{proof}[Proof of Equation~\eqref{F: f-CIP}]
We may express $\phi(t)$ as
\begin{equation}
\label{F: phi-P}
\phi(t) = y_0 + \rho(0) - E(R) - E(\rho(R); R < k).
\end{equation}
Under the standard initial configuration, $x_0 = 1$, $y_0 = 0$ and
$\rho(r) = k-r+1$ for $r = 0, \dots, k-1$, whence
\[ E(\rho(R); R < k) = E(k-R+1; R < k) = 
(k+1) \, P(R < k) - E(R; R < k). \]
Therefore, from~\eqref{F: Poisson} and \eqref{F: phi-P}, we obtain
\begin{align*}
\phi(t) &= (k+1) \, P(R \geq k) - E(R; R \geq k) \\[0.1cm]
&= (k+1-t) \, P(R \geq k) - k \, P(R = k) \\[0.1cm]
&= \frac{(k+1-t) \, \gamma(k,t) - e^{-t} \, t^k}{(k-1)!}.
\end{align*}
Since $f(x) = \phi(-\ln x)$, we arrive at~\eqref{F: f-CIP}.
\end{proof}

Now we prove the assertions about the number of zeros of the function $f$ that are stated in parts {(i)} and {(ii)} of Theorem~\ref{T: Prop-f}.
The following result is useful for this purpose.

\begin{theorem}[P\'{o}lya and Szeg\H{o}\cite{PS}, p.~41, Problems V.38 and V.40]
\label{T: PS}
Let $\psi(t) = \sum_{n = 0}^{\infty} a_n \, t^n$ be a power series whose radius of convergence is $\infty$.
Denote by $C$ the number of changes of sign in the sequence of coefficients of $\psi$.
If $C$ is finite, $C - \nzeros{\psi}{\intazi}$ is a nonnegative even number.
\end{theorem}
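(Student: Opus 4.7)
The plan is to prove the two assertions separately: (a) $\nzeros{\psi}{\intazi} \leq C$, and (b) $C - \nzeros{\psi}{\intazi} \equiv 0 \pmod{2}$. Together these yield that $C - \nzeros{\psi}{\intazi}$ is a nonnegative even integer. This is the standard extension of Descartes' rule of signs from polynomials to entire functions with finitely many sign changes, obtained via approximation by partial sums.

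As a preliminary reduction, if the first nonzero coefficient is $a_{n_0}$, write $\psi(t) = t^{n_0}\tilde{\psi}(t)$ with $\tilde{\psi}(t) = \sum_{n \geq 0} a_{n_0 + n} t^n$. Since $t^{n_0}$ vanishes only at $t = 0$ and does not alter the sign pattern of the nonzero coefficients, $\tilde{\psi}$ has the same sign-change count and the same number of positive real zeros as $\psi$. So I may assume $a_0 \neq 0$.

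For the parity (step (b)), since $C$ is finite there exists $M$ such that all nonzero coefficients $a_n$ with $n \geq M$ share a common sign $\sigma_\infty \in \{-1,+1\}$. These terms dominate $\psi(t)$ for large $t$, hence $\mathrm{sign}(\psi(t)) = \sigma_\infty$ for $t$ sufficiently large, while $\mathrm{sign}(\psi(0)) = \mathrm{sign}(a_0)$. By the intermediate value theorem, the number of positive real zeros of $\psi$ is odd if and only if $a_0 \sigma_\infty < 0$. On the other hand, the sign sequence of the nonzero $a_n$ begins with $\mathrm{sign}(a_0)$ and ends with $\sigma_\infty$, so $C$ itself is odd if and only if $a_0 \sigma_\infty < 0$. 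Thus $C$ and $\nzeros{\psi}{\intazi}$ share the same parity.

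For the inequality (step (a)), I would approximate $\psi$ by its $N$-th partial sum $\psi_N(t) = \sum_{n=0}^{N} a_n t^n$. For $N$ large enough to include all sign changes of $(a_n)$ and to have $\mathrm{sign}(a_N) = \sigma_\infty$, the sign-change count of the coefficient sequence of $\psi_N$ equals $C$, and the classical Descartes' rule of signs applied to the real polynomial $\psi_N$ yields that its number of positive real zeros (with multiplicity) is at most $C$. Because $\psi$ has constant sign $\sigma_\infty$ on some $[T, \infty)$, all positive real zeros of $\psi$ lie in the compact interval $[0, T]$, on which $\psi_N \to \psi$ uniformly. At each sign-changing zero $t_i \in (0, T)$ of $\psi$, uniform convergence forces $\psi_N$ to change sign in a small neighborhood of $t_i$ for $N$ large, producing a real zero of $\psi_N$ there; this already delivers $\nzeros{\psi}{\intazi} \leq C$ when all zeros of $\psi$ are sign-changing.

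The main obstacle is handling zeros of even multiplicity of $\psi$, at which $\psi$ does not change sign and the IVT neighborhood argument does not directly produce a corresponding real zero of $\psi_N$. Hurwitz's theorem in complex analysis does supply the correct total number of zeros of $\psi_N$ in a small complex disk centered at each zero of $\psi$, but some of them may appear as complex conjugate pairs rather than as real positive zeros. The standard resolution, following the treatment in \cite{PS}, is to perturb $\psi$ slightly (for instance, adding $\eps \, q(t)$ for a suitably chosen entire $q$ with $C(q) \leq C$) so that all real zeros become simple and sign-changing, apply the already-established inequality to the perturbed function, and pass to the limit using upper-semicontinuity of sign changes together with the parity step above to account for zeros that coalesce. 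In the applications in Section~\ref{S:Proof f}, the zeros of interest are simple, so this last refinement does not come into play.
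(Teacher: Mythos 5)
The paper does not actually prove this statement: it is quoted from P\'{o}lya and Szeg\H{o} (Problems V.38 and V.40) and used as a black box in Section~\ref{S:Proof f}, so there is no internal proof to compare yours against. Judged on its own, your parity step (the V.40 half) is essentially correct: after factoring out $t^{n_0}$, the number of sign changes $C$ is odd iff $\mathrm{sign}(a_0)\,\sigma_\infty<0$, and so is the zero count. You should, however, state explicitly that zeros are counted \emph{with multiplicity} and that $\psi$ changes sign across a zero iff its multiplicity is odd; ``by the intermediate value theorem'' alone does not give the claim (a single double zero with $a_0\sigma_\infty>0$ would be a counterexample to the statement for the count without multiplicity). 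You also implicitly need that $\psi$ has finitely many positive zeros, which follows from the identity theorem once you know the zeros lie in a compact subinterval of $[0,\infty)$.

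The genuine gap is in the inequality $\nzeros{\psi}{\intazi}\le C$. Your partial-sum argument produces a real zero of $\psi_N$ only near each \emph{sign-changing} zero of $\psi$; near a zero of even multiplicity $m$, Hurwitz gives $m$ zeros of $\psi_N$ in a complex disk, but these may all be nonreal conjugate pairs, so Descartes applied to $\psi_N$ does not bound $\sum m_i$. The perturbation you invoke to repair this does not obviously work as described: a generic perturbation \emph{destroys} an even-order real zero (splitting it into a conjugate pair), so applying the inequality to the perturbed function and ``passing to the limit'' loses exactly the zeros you are trying to count. This step is where the proof actually lives, and it is cleaner to avoid approximation altogether and run the classical induction on $C$ directly on the entire function: if the first sign change of $(a_n)$ occurs between consecutive nonzero coefficients $a_p$ and $a_q$, pick $c\in(p,q)$ and set $g(t)=t^{c+1}\frac{d}{dt}\bigl(t^{-c}\psi(t)\bigr)=\sum_n (n-c)\,a_n\,t^{n}$. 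The factor $(n-c)$ flips the signs of exactly the initial block of coefficients, so $g$ has $C-1$ sign changes, while a Rolle count (a zero of multiplicity $m$ of $t^{-c}\psi$ yields a zero of multiplicity $m-1$ of its derivative, plus one more zero strictly between consecutive distinct zeros) shows $g$ has at least $\nzeros{\psi}{\intazi}-1$ positive zeros with multiplicity; induction finishes the proof and handles multiple zeros with no extra work. For the applications in Section~\ref{S:Proof f} the distinction is harmless only because the relevant zeros are simple, as you note.
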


For $t \geq 0$, we define
\[ \psi(t) = (y_0 + \rho(0) - t) \, e^t 
- \sum_{r=0}^{k-1} \frac{\rho(r)}{r!} \, t^r. \]
Clearly, $\nzeros{f}{\intazx} = \nzeros{\phi}{\intazi} = \nzeros{\psi}{\intazi}$.
For $n \geq 0$, let $\psi^{(n)}$ denote the $n$th derivative of $\psi$.
Then, for $n = 0, \dots, k-1$,
\[ \psi^{(n)}(t) = (y_0 + \rho(0) - n - t) \, e^t 
- \sum_{r=n}^{k-1} \frac{\rho(r)}{(r-n)!} \, t^{r-n}, \]
whereas for $n \geq k$,
\[ \psi^{(n)}(t) = (y_0 + \rho(0) - n - t) \, e^t. \]
Hence, the coefficient of the general term in a power series expansion for $\psi$ around $0$ is given by
\[ a_n =
\left\{
\begin{array}{cl}
\dfrac{y_0 + \rho(0) - \rho(n) - n}{n!} &\text{if } 0 \leq n \leq k - 1, \\[0.4cm]
\dfrac{y_0 + \rho(0) - n}{n!} &\text{if } n \geq k.
\end{array}	\right. \]

\begin{proof}[Proof of Part (i)]
Since $\rho(r) = k-r+1$ for every $r = 0, \dots, k-1$ under the standard initial configuration, we obtain that
\begin{align*}
&a_n = 0 \, \text{ for } 0 \leq n \leq k - 1,\\[0.1cm]
&a_k = 1 / k! > 0,\\[0.1cm]
&a_{k + 1} = 0, \quad \text{and}\\[0.1cm]
&a_n < 0 \, \text{ for } n \geq k + 2.
\end{align*}
Therefore, $C = 1$, and thus, from Theorem~\ref{T: PS}, $\nzeros{f}{\intazu} = \nzeros{\psi}{\intazi} = 1$.
Consequently, $\nzeros{f}{\intfzu} = 2$.
Denoting by $\tau_{\infty}$ the unique positive zero of $\psi$, we conclude that the two zeros of $f$ are $x_{\infty} = e^{-\tau_{\infty}} < 1$ and $x_{0} = 1$.
\end{proof}

\begin{proof}[Proof of Part (ii)]
Assume that $x_{0} < 1$ and $y_{i,0} = 0 $ for every $i = 1,\dots,k-1$.
In this case, $\rho(r) = (k-r+1) \, x_0$ for $r = 0, \dots, k-1$, whence
\[ a_n =
\left\{
\begin{array}{cl}
\dfrac{y_0 - n \, (1 - x_0)}{n!} &\text{if } 0 \leq n \leq k - 1, \\[0.4cm]
\dfrac{y_0 + (k + 1) \, x_0 - n}{n!} &\text{if } n \geq k.
\end{array}	\right. \]
It is not difficult to prove that the sequence $\{a_n\}$ has the following properties:
\begin{enumerate}[label=(\alph*),itemsep=1ex]
\item $a_0 = y_0 \geq 0$.

\item If $a_n \leq 0$ for some $0 \leq n \leq k - 2$, then $a_{n + 1} < 0$.

\item $a_{n} < 0$ for every $n \geq k + 1$.

\end{enumerate}
Using Theorem~\ref{T: PS}, we have the following cases to consider:
\begin{enumerate}
\item[{\bf (1)}] $y_0 = 0$ and $x_0 \leq k/(k + 1)$:
Then $C = \nzeros{\psi}{\intazi} = 0$, so $x_0$ is the unique zero of the function~$f$ in $\intfzx$.

\item[{\bf (2)}] $y_0 = 0$ and $x_0 > k/(k + 1)$:
Here $C = 2$, hence $\nzeros{\psi}{\intazi} \in \{0, 2\}$ and $\nzeros{f}{\intfzx} \in \{1, 3\}$.

\item[{\bf (3)}] $y_0 > 0$ and $y_0 + (k + 1) \, x_0 \leq k$:
$C = \nzeros{\psi}{\intazi} = \nzeros{f}{\intfzx} = 1$, and $x_{\infty}$ is the unique zero of~$f$ in $\intfzx$.

\item[{\bf (4)}] $y_0 > 0$ and $y_0 + (k + 1) \, x_0 > k$:
$C \in \{1, 3\}$, thus $\nzeros{\psi}{\intazi} = \nzeros{f}{\intfzx} \in \{1, 3\}$.

\end{enumerate}

\smallskip
This finishes the proof of part (ii).
\end{proof}

\section{Proofs of Theorems~\ref{T: CLT-2} and \ref{T: CLT-3}}
\label{S:Proofs CLT}

\subsection{Proof of Theorem~\ref{T: CLT-2}}

Let $\{\mathcal{R}^{N}(t)\}_{t\in[0,\infty)}$ be the $2$-spreading Maki--Thompson model with the standard initial configuration.
For each transition $\ell_i$ of the process, we consider the corresponding $\beta_{\ell_i}$ function given by
\begin{equation}
\label{F: Rates TCP-2}
\begin{array}{cc}
\text{Increment} &\quad \text{Rate} \\[0.2cm]
\ell_0 = (-1, 1, 0) &\quad \beta_{\ell_0}\vetord = x, \\[0.3cm]
\ell_1 = (0, -1, 1) &\quad \beta_{\ell_1}\vetord = y_1,\\[0.3cm]
\ell_2 = (0, 0, -1) &\quad \beta_{\ell_2}\vetord = 1 - x - y_1.
\end{array}
\end{equation}
As explained in Section~\ref{SS: TCP} (see \eqref{F: Beta}), we may couple $\{\mathcal{R}^{N}(t)\}_{t\in[0,\infty)}$ with a density dependent population process $\{\tilde{\mathcal{R}}^{N}(t)\}_{t\in[0,\infty)}$ starting from the same initial configuration and whose transition scheme is described by~\eqref{F: Rates TCP-2}, in such a way that they have the same final values.

Towards proving Theorem~\ref{T: CLT-2}, we apply Theorem~11.4.1 of \cite{MPCC} to the process $\tilde{\mathcal{R}}^{N}$.
We adopt the notations used there; in our case, the Gaussian process $V$ defined on p.~458 is three-dimensional, and denoted by $\GV = (\GV_x, \GV_{y_1}, \GV_y)$.
The initial steps are presented for general $k$ and initial configuration in Sections~\ref{SS: TCP} and~\ref{SS: Solution}.
Moreover, some of the required computations were carried out with the help of symbolic mathematical software.
The drift function associated to $\tilde{\mathcal{R}}^{N}$ is given by
\[ F\vetord = \sum_{i = 0}^{2} \ell_i \, \beta_{\ell_i}\vetord 
= (-x, x - y_1, 2 y_1 + x - 1). \]
Consequently, the system of ordinary differential equations for which the scaled stochastic system converges almost surely on bounded time intervals is 
\begin{equation}
\label{F: ODE-2}
\begin{cases}
x^{\prime}(t) = -x(t), \\[0.1cm]
y_1^{\prime}(t) = x(t) - y_1(t), \\[0.1cm]
y^{\prime}(t) = 2 y_1(t) + x(t) - 1, \\[0.1cm]
x(0) = 1, y_1(0) = 0, y(0) = 0.
\end{cases}
\end{equation}
The solution of~\eqref{F: ODE-2} is $r(t) = (x(t), y_1(t), y(t))$, where
\begin{equation*}
x(t) = e^{-t}, \quad
y_1(t) = t \, e^{-t}, \quad
y(t) = f(x(t)),
\end{equation*}
and $f(x) = 3 (1 - x) + (2 x + 1) \ln x$.
We consider the function $\varphi\vetord = y$, so that
\begin{align*}
\tf &= \inf \{t \geq 0: y(t) \leq 0 \} \approx 2.14913, \\[0.1cm]
\xf &= x(\tf) \approx 0.116586, \quad \text{and} \\[0.1cm]
\yuf &= y_1(\tf) \approx 0.250558.
\end{align*}
Furthermore, we have that
\begin{equation}
\label{F: df}
\df = \nabla \varphi(r(\tf)) \cdot F(r(\tf)) = 2 \, \yuf + \xf - 1 \approx -0.382298 < 0.
\end{equation}
From Theorem~11.4.1 of \cite{MPCC}, we conclude that
\begin{equation}
\label{F: VP-2}
\sqrt{N} \left(\frac{X^{N}(\tau^{(N)})}{N} - x_\infty, 
\frac{Y_{1}^{N}(\tau^{(N)})}{N} - y_{1,\infty},
\frac{Y^{N}(\tau^{(N)})}{N} \right)
\end{equation}
converges in distribution as $N \to \infty$ to
\begin{equation*}
\GV(\tf) - \frac{\GV_{y}(\tf)}{\df} \, F(r(\tf)),
\end{equation*}
where $\df$ is defined in~\eqref{F: df}.
Let $\lf$ denote the covariance matrix of $\GV(\tf)$, and define
\[ B =
\begin{pmatrix}
1 & 0 & \xf / \df \\
0 & 1 & (-\xf + \yuf) / \df
\end{pmatrix}. \]
Hence, the random vector formed by the first two components of the vector in~\eqref{F: VP-2} converges in distribution as $N \to \infty$ to a mean zero bivariate normal distribution, whose covariance matrix is expressed by
\begin{equation}
\label{F: CM-2}
\Sigma = B \, \lf \, B^T,
\end{equation}
where $B^T$ is the transpose of $B$.
To finish the proof, it remains to explain the main steps in the computation of $\lf = \Cov(\GV(\tf), \GV(\tf))$.
First, the matrix of partial derivatives of the drift function $F$ and the matrix $G$ are given by
\begin{equation*}
{\allowdisplaybreaks
\begin{aligned}
\partial F\vetord &= 
\begin{pmatrix}
-1 & 0 & 0 \\
 1 & -1 & 0 \\
 1 & 2 & 0 
\end{pmatrix} \quad \text{ and} \\
G\vetord &= \sum_{i = 0}^{2} \ell_i \, \ell_i^T \, \beta_{\ell_i}\vetord =
\begin{pmatrix}
 x & -x & 0 \\
-x & x+y_1 & -y_1 \\
 0 & -y_1 & 1-x 
\end{pmatrix}.
\end{aligned}}%
\end{equation*}
Moreover, the solution $\Phi$ of the matrix equation
\[ \frac{\partial}{\partial t} \, \Phi(t, s) = \partial F(x(t), y_1(t), y(t)) \, \Phi(t, s),
\quad \Phi(s, s) = I_3, \]
where $I_3$ denotes the identity matrix of order $3$, is obtained as
\[ 
\Phi(t, s) =
\begin{pmatrix}
e^{s-t} & 0 & 0 \\
-(s-t) e^{s-t} & e^{s-t} & 0 \\
 3 + [2 (s-t)-3] e^{s-t} & 2 (1-e^{s-t}) & 1
\end{pmatrix}.
\]
Then, since the covariance matrix of the Gaussian process $\GV$ is
\begin{equation*}
\Cov(\GV(t), \GV(r)) = \int_0^{t \wedge r} \Phi(t, s) \, G(x(s), y_1(s), y(s)) \, {[\Phi(r, s)]}^T \, ds,
\end{equation*}
we obtain that $\lf$ has the following elements:
{\allowdisplaybreaks
\begin{align*}
\lf^{1,1} &= \xf (1 - \xf), \qquad \lf^{1,2} = -\xf \yuf, \qquad \lf^{1,3} = 3 \xf^2 + (2 \yuf - 3) \xf + \yuf = 0, \\[0.1cm]
\lf^{2,2} &= \yuf (1 - \yuf), \qquad \lf^{2,3} = \frac{\yuf
   \left(3 \xf^2 + (2 \yuf - 3) \xf + \yuf\right)}{\xf} = 0, \\[0.1cm]
\lf^{3,3} &= \frac{-9
   \xf^3 + (9 - 12 \yuf) \xf^2 + 2 (1 - 2 \yuf) \yuf \xf - 4
   \yuf^2 + \yuf}{\xf}. 
\end{align*}}%
Finally, using~\eqref{F: CM-2}, we get formula~\eqref{F: Cov Matrix-2}.\qed

\subsection{Proof of Theorem~\ref{T: CLT-3}}

We prove the Central Limit Theorem for the final outcome of the $3$-spreading Maki--Thompson model, starting from the standard initial configuration, by following a similar line of arguments as in the proof of Theorem~\ref{T: CLT-2}.
For convenience and ease of presentation, we summarize the main steps:
\begin{enumerate}
\item[{\bf (1)}] Transition scheme of the time-changed process $\{\tilde{\mathcal{R}}^{N}(t)\}_{t\in[0,\infty)}$:
\begin{equation*}
\begin{array}{cc}
\text{Increment} &\quad \text{Rate} \\[0.2cm]
\ell_0 = (-1, 1, 0, 0) &\quad \beta_{\ell_0}\vetort = x, \\[0.3cm]
\ell_1 = (0, -1, 1, 0) &\quad \beta_{\ell_1}\vetort = y_1,\\[0.3cm]
\ell_2 = (0, 0, -1, 1) &\quad \beta_{\ell_2}\vetort = y_2,\\[0.3cm]
\ell_3 = (0, 0, 0, -1) &\quad \beta_{\ell_3}\vetort = 1 - x - y_1 - y_2.
\end{array}
\end{equation*}

\item[{\bf (2)}] Basic definitions and computations:
{\allowdisplaybreaks
\begin{align*}
\GV &= (\GV_x, \GV_{y_1}, \GV_{y_2}, \GV_y) \quad \text{(Gaussian process)},\\[0.1cm]
F\vetort &= \sum_{i = 0}^{3} \ell_i \, \beta_{\ell_i}\vetort 
= (-x, x - y_1, y_1 - y_2, 2 y_2 + y_1 + x - 1),\\[0.1cm]
\varphi\vetort &= y,\\[0.1cm]
\partial F\vetort &= 
\begin{pmatrix}
-1 & 0  & 0  & 0 \\
 1 & -1 & 0  & 0 \\
 0 & 1  & -1 & 0 \\
 1 & 1  & 2  & 0 \\
\end{pmatrix},\\[0.1cm]
G\vetort &= \sum_{i = 0}^{3} \ell_i \, \ell_i^T \, \beta_{\ell_i}\vetort =
\begin{pmatrix}
 x & -x & 0 & 0 \\
-x & x+y_1 & -y_1 & 0 \\
 0 & -y_1 & y_1+y_2 & -y_2 \\
 0 & 0 & -y_2 & 1-x-y_1 \\
\end{pmatrix}.
\end{align*}}%

\item[{\bf (3)}] System of ordinary differential equations:
\begin{equation*}
\begin{cases}
x^{\prime}(t) = -x(t), \\[0.1cm]
y_1^{\prime}(t) = x(t) - y_1(t), \\[0.1cm]
y_2^{\prime}(t) = y_1(t) - y_2(t), \\[0.1cm]
y^{\prime}(t) = 2 y_2(t) + y_1(t) + x(t) - 1, \\[0.1cm]
x(0) = 1, y_1(0) = 0, y_2(0) = 0, y(0) = 0.
\end{cases}
\end{equation*}

\item[{\bf (4)}] Solution:
$r(t) = (x(t), y_1(t), y_2(t), y(t))$, where
\begin{equation*}
x(t) = e^{-t}, \quad
y_1(t) = t \, e^{-t}, \quad
y_2(t) = \frac{t^2 \, e^{-t}}{2}, \quad
y(t) = f(x(t)),
\end{equation*}
and $f(x) = 4 (1 - x) + (3 x + 1) \ln x - x \, \ln^2 x$.

\item[{\bf (5)}] Final values:
\begin{align*}
\tf &= \inf \{t \geq 0: y(t) \leq 0 \} \approx 2.688, \\[0.1cm]
\xf &= x(\tf) \approx 0.0680169, \\[0.1cm]
\yuf &= y_1(\tf) \approx 0.182829, \\[0.1cm]
\ydf &= y_2(\tf) \approx 0.245723, \quad \text{and} \\[0.1cm]
\df &= \nabla \varphi(r(\tf)) \cdot F(r(\tf)) = 2 \, \ydf + \yuf + \xf - 1 \approx -0.257709 < 0.
\end{align*}

\item Matrix $\Phi$:
\[ 
\Phi(t, s) =
\begin{pmatrix}
e^{s-t} & 0 & 0 & 0 \\
-e^{s-t} (s-t) & e^{s-t} & 0 & 0 \\
e^{s-t} (s-t)^2 / 2 & -e^{s-t} (s-t) & e^{s-t} & 0 \\
4-e^{s-t} \left(s^2-(2 t+3) s+t^2+3 t+4\right) & e^{s-t} (2
   (s-t)-3)+3 & 2 \left(1-e^{s-t}\right) & 1 \\
\end{pmatrix}.
\]

\item[{\bf (6)}] Computation of $\lf = \Cov(\GV(\tf), \GV(\tf))$:
Since the Gaussian process $\GV$ has covariance matrix
\begin{equation*}
\Cov(\GV(t), \GV(r)) = \int_0^{t \wedge r} \Phi(t, s) \, G(x(s), y_1(s), y_2(s), y(s)) \, {[\Phi(r, s)]}^T \, ds,
\end{equation*}
we get
\[ 
\lf \approx
\begin{pmatrix}
\phantom{-}0.0633906 & -0.0124355 & -0.0167133 & 0 \\
-0.0124355 & \phantom{-}0.149403\phantom{5} & -0.0449253 & 0 \\
-0.0167133 & -0.0449253 & \phantom{-}0.185343\phantom{3} & 0 \\
0 & 0 & 0 & 0.692721 \\
\end{pmatrix}.
\]

\item[{\bf (7)}] CLT obtained from Theorem~11.4.1 of \cite{MPCC}:
The random vector
\begin{equation}
\label{F: VP-3}
\sqrt{N} \left(\frac{X^{N}(\tau^{(N)})}{N} - x_\infty, 
\frac{Y_{1}^{N}(\tau^{(N)})}{N} - y_{1,\infty},
\frac{Y_{2}^{N}(\tau^{(N)})}{N} - y_{2,\infty},
\frac{Y^{N}(\tau^{(N)})}{N} \right)
\end{equation}
converges in distribution as $N \to \infty$ to
\begin{equation*}
\GV(\tf) - \frac{\GV_{y}(\tf)}{\df} \, F(r(\tf)).
\end{equation*}

\item[{\bf (8)}] Conclusion: The random vector formed by the first three components of the vector in~\eqref{F: VP-3} converges in distribution as $N \to \infty$ to a mean zero trivariate normal distribution, whose covariance matrix is given by
\( \Sigma = B \, \lf \, B^T \),
where 
\[ B =
\begin{pmatrix}
1 & 0 & 0 & \xf / \df \\
0 & 1 & 0 & (-\xf + \yuf) / \df \\
0 & 0 & 1 & (-\yuf + \ydf) / \df \\
\end{pmatrix}
\approx
\begin{pmatrix}
1 & 0 & 0 & -0.263929 \\
0 & 1 & 0 & -0.445513 \\
0 & 0 & 1 & -0.244048 \\
\end{pmatrix}. \]
Hence, we obtain formula~\eqref{F: Cov Matrix-3}.\qed
\end{enumerate}

\section{Conclusion}

In this work, we contribute to the theory of mathematical models for information spreading by formulating a generalization for the Maki--Thompson rumor model. In our model, we assume that each ignorant becomes a spreader only after hearing the rumor a number $k$ of times, for $k\in\mathbb{N}$. By letting $k=1$ we recover the original Maki--Thompson model so our results generalize those for the basic model. Our generalization is motivated by  recent experimental research, where it has been found that repeated presentation of uncertain statements increases validity judgments of those statements, see \cite{difonzo}. This is the well-known illusory truth effect, which is the tendency to believe false information to be correct after repeated exposure, see also \cite{hasher,polage}. Therefore, our model may be used as a mathematical model illustrating the impact of multiple-repetition in the transmission of information. Since our model is formulated as a multidimensional continuous-time Markov chain, we can establish limit theorems by proving convergence of its trajectory to a solution of a system of ordinary differential equations. The advantage of our approach lies in the fact that we obtain results that not only localize the asymptotic proportion of individuals in the different classes of the population, but also we provide a complete description of the random fluctuations between these values and those from the original stochastic process. Moreover, with more work, our approach may be extended to related models such as the Daley--Kendal.

\section*{Acknowledgments}
This work has been partially supported by FAPESP (Grants 2017/10555-0, 2018/22972-8), CAPES (under the Program MATH-AMSUD/CAPES 88881.197412/2018-01), and CNPq (Grants 304676/2016-0, 439422/2018-3). Part of this work was carried out during visits of P.M.R. at the UFABC and of A.R. at the UFPE. They are grateful to these universities for their hospitality and support. Special thanks are also due to the three anonymous reviewers for their helpful comments and suggestions.


\end{document}